\theoremstyle{plain}
\newcommand{\aaa}{\alpha}
\newcommand{\ddd}{\delta}
\newcommand{\defeq}{{\buildrel {\rm def}\over =}}
\newcommand{\ds}{\displaystyle}
\newcommand{\ess}{\emptyset}
\newcommand{\fff}{{\varphi}}
\newcommand{\lf}{\lfloor}
\newcommand{\oo}{\infty}
\newcommand{\ooo}{\omega}
\newcommand{\OOO}{\Omega}
\newcommand{\ppp}{\psi}
\newcommand{\rf}{\rfloor}
\newcommand{\RR}{{\Rightarrow}}
\newcommand{\sm}{\setminus}
\newcommand{\sse}{\subset}
\renewcommand{\ggg}{\gamma}
\renewcommand{\lll}{\lambda}
\newcommand{\veps}{\varepsilon}
\def\QQ{\mathbb Q}
\def\RR{\mathbb R}
\def\NN{\mathbb N}
\def\ZZ{\mathbb Z}
\def\TT{\mathbb T}
\def\dimh{\dim_{\mathcal H}}
\def\lip{\textrm{Lip}}
\newtheorem{theorem}{Theorem}[section]
\newtheorem{lemma}[theorem]{Lemma}
\newtheorem{corollary}[theorem]{Corollary}
\theoremstyle{definition}}
\theoremstyle{definition}}
\theoremstyle{definition}\newtheorem{example}[theorem]{Example}}
\theoremstyle{definition}}
\theoremstyle{definition}}
\theoremstyle{definition}\newtheorem{remark}[theorem]{Remark}}
\newtheorem{question}[theorem]{Question}
\date{\today}
\title{Fast and slow points of Birkhoff sums}
\author{Fr\'ed\'eric Bayart, Zolt\'an Buczolich, Yanick Heurteaux}
\address{Universit\'e Clermont Auvergne, LMBP, UMR 6620 - CNRS, Campus des C\'ezeaux, 3 place Vasarely, TSA 60026, CS 60026 F-63178 Aubi\`ere Cedex, France.
}
\address{
Department of Analysis\\
ELTE E\"otv\"os Lor\'and
University\\ P\'azm\'any P\'eter S\'et\'any 1/c\\
 1117 Budapest, Hungary
}
\email{Frederic.Bayart@uca.fr, buczo@cs.elte.hu, Yanick.Heurteaux@uca.fr}
\thanks{The first and the third author were partially supported by the grant ANR-17-CE40-0021 of the French National Research Agency ANR (project Front).
The second author was supported by the Hungarian National Research, Development and Innovation Office--NKFIH, Grant  124003. He also thanks the R\'enyi Institute where he was
a visiting researcher during the academic year 2017-18.
\newline\indent {\it Mathematics Subject
Classification:} Primary : 37A05, Secondary : 11K55, 28A78, 60F15.
\newline\indent {\it Keywords:} Birkhoff sum, typical/generic properties, group rotation, coboundary, law of iterated logarithm.}
\subjclass{}
\keywords{}
\begin{document}

\begin{abstract}
  We investigate the growth rate of the Birkhoff sums $S_{n,\alpha}f(x)=\sum_{k=0}^{n-1}f(x+k\alpha)$,
where $f$ is a continuous function with zero mean defined on the unit circle $\mathbb T$ and $(\alpha,x)$ is
a ``typical'' element of $\TT^2$. The answer depends on the meaning given to the word ``typical''. Part of the work will be done in a more general context. 
\end{abstract}

\maketitle

\section{Introduction}

Let $\TT=\RR / \ZZ$ be the unit circle and let $\alpha\in\RR\backslash\QQ$ be irrational. 
Denote by $C_0(\TT)$,
the set of continuous functions on $\TT$ with zero mean, and by $S_{n,\alpha}f(x)$ the $n$-th Birkhoff sum, $S_{n,\alpha}f(x)=\sum_{k=0}^{n-1}f(x+k\alpha)$.
The rotation $R_\alpha:x\mapsto x+\alpha$
defines a uniquely ergodic transformation on $\TT$ with respect to the (normalized) Lebesgue measure $\lambda$. Hence for all $f\in\mathcal C_0(\TT)$
we know that $S_{n,\alpha}f(x)=o(n)$ for all  $x\in\TT$. The main purpose of this paper is to investigate the typical growth of $S_{n,\alpha}f(x)$.

There are several ways to understand this problem. We can fix $\alpha\in\RR\backslash\QQ$  (resp. $x\in\TT$)  and ask for the behaviour of $S_{n,\alpha}f(x)$ for $f$ in a generic subset 
of $\mathcal C_0(\TT)$ and for a typical $x\in \TT$  (resp. for a typical $\alpha\in\TT$). We can also consider it as a problem of two variables and ask for the behaviour of $S_{n,\alpha}f(x)$ for $f$
in a generic subset of $\mathcal C_0(\TT)$ and for a typical $(\alpha,x)\in\TT^2$. 
  There are also several ways to understand the word ``typical''. We can look for a residual set of the parameter space or for a set of full  Lebesgue measure. 

  We shall try to put this in a general context. If we fix $\alpha\in\RR\backslash\QQ$, then we consider the Birkhoff sums associated to a uniquely ergodic transformation 
on the compact metric space $\TT$. Hence, let us fix $\Omega$ an infinite compact metric space and $T: \Omega\to \Omega$ an invertible continuous map such that $T$ is uniquely ergodic. 
Let $\mu$ be the ergodic measure, which is regular and continuous. We will also assume that it has full support (equivalenty, that all orbits of $T$ are dense). 
For $x\in\Omega$ and $f\in\mathcal C_0(\Omega)$, the Birkhoff sum $S_{n,T}f(x)$ is now defined by $\sum_{k=0}^{n-1}f\left(T^k x\right)$. Using $\psi:\NN\to\NN$ with $\psi(n)=o(n)$ for $f\in\mathcal C_0(\Omega)$, let us define
$$\mathcal E_\psi(f)=\left\{x\in\Omega;\ \limsup_n \frac{|S_{n,T}f(x)|}{\psi(n)}=+\infty\right\}.$$

The set $\mathcal E_\psi(f)$ has already been studied by several authors. In particular, it was shown by Krengel \cite{KR78} (when $\Omega=[0,1]$) 
and later by Liardet and Voln\'y \cite{LiVol97} that, for all functions $f$ in a residual subset of $\mathcal C_0(\Omega)$, $\mu\left(\mathcal E_\psi(f)\right)=1$.
We complete this result by showing that $\mathcal E_\psi(f)$ is also residual.

\begin{theorem}\label{thm:individualresults}
Suppose that  $\psi:\NN\to\NN$ satisfies $\psi(n)=o(n)$. There exists a residual set $\mathcal R\subset\mathcal C_0(\Omega)$ such that for any $f\in \mathcal R$,
 $\mathcal E_\psi(f)$ is residual and of full $\mu$-measure in $\Omega$.
\end{theorem}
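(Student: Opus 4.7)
The plan is a Baire category argument on the product space $\mathcal{C}_0(\Omega)\times\Omega$, combined with the Kuratowski–Ulam theorem. For each $M,N\in\NN$ I introduce
$$W_{M,N}=\bigl\{(f,x)\in\mathcal{C}_0(\Omega)\times\Omega:\ \exists n\ge N,\ |S_{n,T}f(x)|>M\psi(n)\bigr\}.$$
Since the map $(f,x)\mapsto S_{n,T}f(x)$ is jointly continuous, $W_{M,N}$ is open, and $\mathcal{E}_\psi(f)=\bigcap_{M,N}(W_{M,N})_f$, where $(W_{M,N})_f$ denotes the $f$-slice. Once density of each $W_{M,N}$ in the product is established, the Kuratowski–Ulam theorem (with $\Omega$ second countable, as a compact metric space) yields, for each pair $(M,N)$, a residual set of $f$ for which $(W_{M,N})_f$ is comeager in $\Omega$. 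As $(W_{M,N})_f$ is open, being comeager is equivalent to being dense, so intersecting over the countable family of pairs $(M,N)$ produces a residual set $\mathcal{R}_0\subset\mathcal{C}_0(\Omega)$ such that for every $f\in\mathcal{R}_0$, $\mathcal{E}_\psi(f)$ is a countable intersection of dense open subsets of $\Omega$, hence residual.

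The key step is the density of $W_{M,N}$, which I would handle by a perturbation along a long orbit segment. Given $(f_0,x_0)$ and $\epsilon>0$, unique ergodicity gives $S_{n,T}f_0(x_0)/n\to 0$, and $\psi(n)/n\to 0$ by hypothesis; so I pick $n\ge N$ with $(|S_{n,T}f_0(x_0)|+M\psi(n))/n<\epsilon/4$. Since $\Omega$ is infinite with only dense orbits, $T$ has no periodic points, so $x_0,Tx_0,\dots,T^{n-1}x_0$ are $n$ distinct points; regularity and continuity of $\mu$ let me choose pairwise disjoint open neighborhoods $U_k$ of $T^kx_0$ with $\mu(\bigcup_kU_k)<1/(2n)$. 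A Urysohn-type construction then produces a continuous bump $\tilde g$ with $\tilde g(T^kx_0)=c$ for every $k$, $\mathrm{supp}(\tilde g)\subset\bigcup_kU_k$ and $\|\tilde g\|_\infty\le|c|$. Setting $g=\tilde g-\int\tilde g\,d\mu$ yields $g\in\mathcal{C}_0(\Omega)$ with $\|g\|_\infty\le 2|c|$ and $S_{n,T}g(x_0)=nc-n\int\tilde g\,d\mu$, where the correction term is bounded by $|c|/2$ thanks to the measure bound on $\bigcup_kU_k$. Choosing $|c|$ just above $2(|S_{n,T}f_0(x_0)|+M\psi(n))/n$ (which is less than $\epsilon/2$ by the choice of $n$), with sign such that $S_{n,T}f_0(x_0)\cdot c\ge 0$, yields $\|g\|_\infty<\epsilon$ and $|S_{n,T}(f_0+g)(x_0)|\ge n|c|/2-|S_{n,T}f_0(x_0)|>M\psi(n)$. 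Hence $(f_0+g,x_0)\in W_{M,N}$ and is $\epsilon$-close to $(f_0,x_0)$.

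To complete the proof I intersect $\mathcal{R}_0$ with the residual set supplied by Krengel \cite{KR78} and Liardet–Voln\'y \cite{LiVol97}, which guarantees $\mu(\mathcal{E}_\psi(f))=1$ for $f$ in a residual subset of $\mathcal{C}_0(\Omega)$. The intersection $\mathcal{R}$ remains residual and has both announced properties. The main obstacle is the density argument; the whole scheme hinges on being able to produce a small zero-mean continuous perturbation that is essentially constant along a prescribed orbit segment of arbitrary length, which is made possible by the absence of periodic orbits and by the continuity and regularity of $\mu$ (allowing the supports of the bumps to fit in arbitrarily small measure).
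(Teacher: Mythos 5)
Your argument is correct, but it proves the theorem by a genuinely different route than the paper. The paper's proof is a single constructive scheme: it combines a dense sequence of coboundaries $(h_l)$ (whose Birkhoff sums are uniformly bounded) with functions $f_l$ produced by its Lemma \ref{lem:bigpartialsums}, which uses a Rokhlin tower of height $2n$ and the step-function approximation Lemma \ref{lem:stepapproximation} to make $|S_{m,T}f_l|$ large on a compact set of measure $>1-1/l$; the balls $B(g_l,\delta_l)$ around $g_l=h_l+f_l$ then give a residual set of $f$ for which $\mathcal E_\psi(f)$ contains a limsup of open sets $F_{l_k}$ of measure tending to $1$, which is simultaneously of full measure and (via full support of $\mu$) residual. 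You instead work on the product $\mathcal C_0(\Omega)\times\Omega$, show each set $W_{M,N}$ is open and dense by a bump perturbation that is constant along a single long orbit segment (using non-atomicity of $\mu$ and absence of periodic points, both valid under the paper's standing assumptions), and then apply Kuratowski--Ulam; your density computation and the mean-correction estimate are sound, and this yields the residuality of $\mathcal E_\psi(f)$ for a residual set of $f$ without any Rokhlin tower or coboundary machinery. What this buys is a shorter, softer category argument; what it gives up is the measure statement, since a perturbation supported near one orbit segment says nothing about a large-measure set of points, so you must import $\mu(\mathcal E_\psi(f))=1$ from Krengel and Liardet--Voln\'y. That is legitimate here because the paper itself cites exactly this statement as known and presents Theorem \ref{thm:individualresults} as completing it, but note that the paper's unified construction is self-contained and, more importantly, is the template that gets upgraded later (Lemma \ref{lem:continuous} and Theorems \ref{thm:continuous}, \ref{thm:holder}, and the two-variable results), where the Kuratowski--Ulam shortcut would not suffice.
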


If we allow $\alpha$ to vary in our initial problem, then the natural framework now is that of topological groups. Hence,  we fix  a compact and connected metric  abelian group $(G,+)$.
By Corollary 4.4 in \cite[Chapter 4]{KuiNie},  $G$ is a monothetic group, that is possesses a dense cyclic subgroup.
Let $\mu$ be the Haar measure on $G$.
It is invariant under each translation, or group rotation $T_u(x)=x+u$. We define $G_0$ as the set of $u\in G$ such that $T_u$ is ergodic.
By well-known results of ergodic theory, $u$ belongs to $G_0$ if and only if  $\{nu;\ n\in\ZZ\}$   is dense in $G$;  in this case $T_{u}$ is uniquely ergodic, only the Haar measure is invariant with respect to $T_{u}$.   Moreover, $G_0$ is always nonempty,
it is dense and its Haar measure is equal to 1 (see Theorem 4.5 in \cite[Chapter 4]{KuiNie}).

  Contrary to what happens in Theorem \ref{thm:individualresults}, the growth of $S_{n,u}f(x)$ for a typical $(u,x)\in G^2$ is not the same from the  topological and from the probabilistic points of view.  
 For the last one, the typical growth of 
$S_{n,u}f(x)$   has order   $n^{1/2}$.

\begin{theorem}\label{thm:measuretwovariables}
\begin{enumerate}
 \item For all $\nu>1/2$ and all $f\in L^2_0(G)$, 
 $$\mu\otimes \mu\left(\left\{(u,x)\in G^2;\ \limsup_n \frac{|S_{n,u} f(x)|}{n^\nu}\geq 1\right\}\right)=0.$$
 \item There exists a residual subset $\mathcal R\subset \mathcal C_0(G)$ such that, for all $f\in\mathcal R$, 
  $$\mu\otimes \mu\left(\left\{(u,x)\in G^2;\ \limsup_n \frac{|S_{n,u} f(x)|}{n^{1/2}}=+\infty\right\}\right)=1.$$
\end{enumerate}
\end{theorem}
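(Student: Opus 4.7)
The plan is to treat (i) and (ii) separately.

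For (i), I would expand $f = \sum_{\gamma\in\hat G\setminus\{1\}} c_\gamma\gamma$ in the Fourier basis of $G$. Since $\hat G$ is torsion-free (because $G$ is compact and connected), $\int_G \gamma(mu)\,d\mu(u) = 0$ for every $\gamma\ne 1$ and every $m\ne 0$. Consequently the translates $X_k(u,x):=f(x+ku)$ are orthogonal in $L^2(G\times G)$, so
$$\iint_{G\times G}|S_{n,u}f(x)|^2\,d\mu(x)\,d\mu(u) = n\|f\|_2^2.$$
Applying the Rademacher--Menshov maximal inequality to the orthonormal system $(X_k/\|f\|_2)$ yields $\iint\max_{1\le n\le N}|S_{n,u}f(x)|^2\,d\mu\,d\mu \le C(\log N)^2 N\|f\|_2^2$. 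A Chebyshev estimate along $N=2^J$, Borel--Cantelli, and a standard filling-in on $[2^J, 2^{J+1})$ then yield $|S_{n,u}f(x)|/n^\nu \to 0$ for $(\mu\otimes\mu)$-a.e.\ $(u,x)$ whenever $\nu > 1/2$, which is stronger than (i).

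For (ii), I would set up a Baire category argument on $\mathcal C_0(G)$. For $M,N\in\NN$, define
$$\mathcal U_{M,N} := \Bigl\{f\in\mathcal C_0(G):(\mu\otimes\mu)\Bigl(\bigcup_{n\ge N}\{(u,x):|S_{n,u}f(x)|>M\sqrt{n}\}\Bigr)>1-\tfrac{1}{M}\Bigr\},$$
and put $\mathcal R := \bigcap_{M,N}\mathcal U_{M,N}$. Each inner set $\bigcup_n\{|S_{n,u}f(x)|>M\sqrt n\}$ is open in $G\times G$ by continuity of $(u,x)\mapsto S_{n,u}f(x)$, and its $\mu\otimes\mu$-measure is lower semicontinuous in $f$ (under uniform convergence any compact subset of the open set for $f$ is eventually contained in the one for $f_k$), so $\mathcal U_{M,N}$ is open. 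For any $f\in\mathcal R$, a monotone-measure argument gives $(\mu\otimes\mu)(\{\limsup_n|S_{n,u}f(x)|/\sqrt n=+\infty\})=1$, so everything reduces to density of each $\mathcal U_{M,N}$. Given $f_0$ and $\varepsilon>0$, I would first approximate $f_0$ by a trigonometric polynomial $P$ with $\|f_0-P\|_\infty<\varepsilon/2$; the bound $|D_n(\gamma(u))|\le 2/|\gamma(u)-1|$ implies $\sup_n|S_{n,u}P(x)|<\infty$ for $\mu$-a.e.\ $u$, and Egorov furnishes an integer $N'\ge N$ and a set $B\subset G$ with $\mu(B)>1-1/(3M)$ on which $\sup_{n\ge N'}|S_{n,u}P(x)|/\sqrt n<1$. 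It then suffices to exhibit a single $h_0\in\mathcal C_0(G)$ with $\limsup_n|S_{n,u}h_0(x)|/\sqrt n=+\infty$ for $(\mu\otimes\mu)$-a.e.\ $(u,x)$: this limsup property is preserved under positive rescaling, so taking $h:=(\varepsilon/(2\|h_0\|_\infty))h_0$ and $f:=P+h$ completes the density argument.

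The main obstacle is the construction of $h_0$. I would take $h_0=\sum_{j\ge 1}a_j\operatorname{Re}(\gamma_j)$ as a lacunary sum of characters, where $(\gamma_j)\subset\hat G\setminus\{1\}$ is chosen to generate a free abelian subgroup of the torsion-free group $\hat G$, and $(a_j)$ is a sequence of small positive amplitudes with $\sum a_j<\infty$ (so $h_0$ is continuous). For $\mu$-a.e.\ $u$ the values $(\gamma_j(u))_j$ should behave like independent random points on $\TT$, and a law-of-iterated-logarithm type argument for these lacunary sums is expected to give $\limsup_n|S_{n,u}h_0(x)|/\sqrt n=+\infty$ almost everywhere. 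The delicate issue is the quantitative LIL-type estimate at the resonant scales $n\sim 1/|\gamma_j(u)-1|$, where the Dirichlet kernel $|D_n(\gamma_j(u))|$ produces peaks of order $1/|\gamma_j(u)-1|$: one must ensure that these peaks dominate the non-resonant contributions from all other frequencies and survive averaging over $x$, which requires careful Fourier analysis and probabilistic estimates adapted to an ergodic-theoretic setting where randomness comes simultaneously from $u$ and $x$.
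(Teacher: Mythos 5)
Your part (i) is essentially the paper's own proof (orthonormality of the translates $X_k(u,x)=f(x+ku)$ via torsion-freeness of $\hat G$, Menshov--Rademacher, Chebyshev on dyadic blocks, Borel--Cantelli), and your Baire skeleton for part (ii) --- the open sets $\mathcal U_{M,N}$, lower semicontinuity of their measure in $f$, and the reduction of density to adding to a trigonometric polynomial (whose Birkhoff sums are a.e.\ in $u$ uniformly bounded) one fixed function $h_0\in\mathcal C_0(G)$ with $\limsup_n |S_{n,u}h_0(x)|/\sqrt n=+\infty$ for $\mu\otimes\mu$-a.e.\ $(u,x)$ --- is a legitimate reorganization of the paper's genericity argument. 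But that reduction concentrates the entire content of part (ii) into the existence of $h_0$, and this you do not prove: the lacunary-character construction is only asserted to be ``expected'' to satisfy an LIL-type lower bound. Moreover, the heuristic you offer (resonant peaks of the Dirichlet kernel at $n\sim 1/|\gamma_j(u)-1|$) cannot by itself be the mechanism, and the construction as stated is not even well specified enough to be true: since $|S_{n,u}\operatorname{Re}\gamma_j(x)|\le\min\bigl(n,\,2/|\gamma_j(u)-1|\bigr)$, the triangle inequality gives $\sup_n\sup_x |S_{n,u}h_0(x)|/\sqrt n\le\sum_j a_j\sqrt{2/|\gamma_j(u)-1|}$; as each $\gamma_j(u)$ is uniformly distributed on $\TT$, Borel--Cantelli yields $|\gamma_j(u)-1|\ge 1/(j\log^2 j)$ eventually for a.e.\ $u$, so any amplitude choice with $\sum_j a_j\,j^{1/2}\log j<\infty$ (e.g.\ $a_j=j^{-2}$ or $a_j=2^{-j}$, both compatible with your ``small amplitudes, $\sum a_j<\infty$'') produces an $h_0$ for which the quantity you need to be infinite is a.e.\ \emph{finite}. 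So a working choice of $(a_j)$ lives in a narrow window and requires a genuine lower-bound LIL, joint in $(u,x)$ and at all scales $n$, which is precisely the hard step left open.

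The paper closes exactly this hole by a different device, which avoids ever producing a single function with the global a.e.\ property in advance. Lemma \ref{lem:key} works on a finite horizon $[M,N]$: one partitions $G$ into cells $A_k$ of small diameter, splits each into halves $B_k,B_k'$, and assigns independent Rademacher signs $X_k(\omega)$ to $\varphi_k=\mathbf 1_{B_k}-\mathbf 1_{B_k'}$; for $u$ outside a set of measure $<\veps$ the points $x,x+u,\dots,x+(N-1)u$ fall into pairwise distinct cells, so $S_{n,u}g(x,\omega)$ is literally a sum of $n$ independent Rademacher variables, and the finite-range law of the iterated logarithm (Lemma \ref{lem:LIL}, with $N$ chosen uniformly over all such sequences) plus Fubini allows one to fix a single $\omega$; Lemma \ref{lem:stepapproximation} then converts the resulting step function into a continuous $f$ with $\|f\|_\infty\le\veps$ and $\sup_{M\le n\le N}|S_{n,u}f(x)|/\sqrt n\ge C$ on a set of measure $>1-\veps$. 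This quantitative finite-block statement is all the category argument needs (it would make your $\mathcal U_{M,N}$ dense directly, after adding such an $f$ to the trigonometric polynomial $P$ and intersecting with the set where $P$'s sums are controlled). To repair your proof, either substitute this lemma for your $h_0$, or actually prove the LIL lower bound for a carefully tuned lacunary series --- as it stands, that step is a genuine gap.
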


From a topological point of view, the typical growth  of  $S_{n,u}f(x)$   has order   $n$.
  Indeed, for $\psi:\NN\to\NN$ with $\psi(n)=o(n)$, let us  introduce 
$${\mathfrak E}_{\ppp}(f)=\left\{ (u,x)\in G^2\ ; \limsup_{n}\frac{|S_{n,u}f(x)|}{\ppp(n)}=+\oo \right\}.$$
 
\begin{theorem}\label{thm:residualtwovariables}
Suppose that  $\psi:\NN\to\NN$ satisfies $\psi(n)=o(n)$. There exists a residual set ${\mathcal R}^*\sse \mathcal C_0(G)\times G^2$ such that for any
$(f,u,x)\in {\mathcal R}^*$ we have $(u,x)\in {\mathfrak E}_{\ppp}(f)$.
\end{theorem}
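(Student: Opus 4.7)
The plan is to exhibit $\mathcal R^*$ as a dense $G_\delta$ subset of $\mathcal C_0(G) \times G^2$. For every $M, n \in \NN$, set
$$\mathcal U_{M,n} = \bigl\{ (f,u,x) \in \mathcal C_0(G) \times G^2 \; : \; |S_{n,u}f(x)| > M \psi(n) \bigr\}.$$
Since $(f,u,x) \mapsto S_{n,u}f(x)$ is jointly continuous (as a finite sum of the continuous maps $(f,u,x)\mapsto f(x+ku)$), each $\mathcal U_{M,n}$ is open, hence so is $\mathcal U_M := \bigcup_{n \ge 1} \mathcal U_{M,n}$. Then $\mathcal R^* := \bigcap_{M \ge 1} \mathcal U_M$ is $G_\delta$, and every $(f,u,x)\in \mathcal R^*$ satisfies $\limsup_n |S_{n,u}f(x)|/\psi(n) = +\infty$, i.e.\ $(u,x)\in \mathfrak E_\psi(f)$. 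It remains to show that each $\mathcal U_M$ is dense.

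Fix $(f_0, u_0, x_0)$, $\varepsilon > 0$, and $M \in \NN$. Since $G_0$ is dense in $G$, choose $u \in G_0$ with $d(u, u_0) < \varepsilon$, and keep $x = x_0$. Because $u$ generates a dense cyclic subgroup, for every $n$ the orbit $\{x_0 + ku : 0 \le k \le n-1\}$ consists of $n$ pairwise distinct points with some minimum pairwise distance $s_n > 0$. The Haar measure $\mu$ is atomless (the group is infinite), so by regularity we may choose $\delta_n \in (0, s_n/2)$ small enough that $\sum_{k=0}^{n-1} \mu(B(x_0 + ku, \delta_n)) \le 1/2$. Pick continuous functions $\phi_{n,k} : G \to [0,1]$ with $\phi_{n,k}(x_0 + ku) = 1$ and $\mathrm{supp}(\phi_{n,k}) \subset B(x_0 + ku, \delta_n)$. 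As the supports are pairwise disjoint, $h_n := \sum_{k=0}^{n-1} \phi_{n,k}$ satisfies $0 \le h_n \le 1$, $h_n(x_0 + ku) = 1$ for $0 \le k \le n-1$, and $c_n := \int h_n \, d\mu \le 1/2$.

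Let $\sigma \in \{-1, +1\}$ agree with the sign of $S_{n,u} f_0(x_0)$, with $\sigma = +1$ if it vanishes. Set $g_n = (\varepsilon/2) \sigma (h_n - c_n)$ and $f := f_0 + g_n$. Then $g_n \in \mathcal C_0(G)$, $\|f - f_0\|_\infty \le \varepsilon$, and
$$S_{n,u}g_n(x_0) = \frac{\varepsilon}{2} \sigma \sum_{k=0}^{n-1}\bigl(h_n(x_0 + ku) - c_n\bigr) = \sigma \cdot \frac{n\varepsilon}{2}(1 - c_n).$$
Because $S_{n,u}f_0(x_0)$ and $S_{n,u}g_n(x_0)$ share the sign $\sigma$, their contributions do not cancel, giving
$$|S_{n,u}f(x_0)| \;\ge\; |S_{n,u}g_n(x_0)| \;=\; \frac{n\varepsilon}{2}(1-c_n) \;\ge\; \frac{n\varepsilon}{4}.$$
Using $\psi(n) = o(n)$, I now pick $n$ large enough that $n\varepsilon/4 > M\psi(n)$, which places $(f, u, x_0) \in \mathcal U_{M,n}$ inside the prescribed $\varepsilon$-neighbourhood of $(f_0, u_0, x_0)$. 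The main delicate point is the sign alignment: without matching $\sigma$ to the sign of $S_{n,u}f_0(x_0)$, a large unfavorable Birkhoff sum of $f_0$ could cancel the perturbation, since the only a priori bound on $|S_{n,u}g_n(x_0)|$ from $\|g_n\|_\infty \le \varepsilon$ is the useless $n\varepsilon$. The bump construction is tailored precisely to guarantee a precise lower bound on $|S_{n,u}g_n(x_0)|$, while simultaneously keeping the perturbation small in sup norm and zero-mean.
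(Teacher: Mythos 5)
Your proof is correct, but it follows a genuinely different route from the paper. The paper fixes a $v\in G_0$, takes a dense sequence $(h_l)$ of trigonometric polynomials (which are coboundaries for every $T_{jv}$, hence have uniformly bounded Birkhoff sums), and invokes Lemma \ref{lem:bigpartialsums} (a Rokhlin-tower construction plus Lemma \ref{lem:stepapproximation}) to produce perturbations $f_l$ whose Birkhoff sums are simultaneously large for all $u\in\{jv:1\le j\le l\}$ and all $x$ in a compact set $E_l$ of measure close to $1$; residuality then comes from the density of $\bigcup_{l\ge L}\{g_l\}\times\{jv\}\times E_l$ in the product. You instead write the target set directly as $\bigcap_M\bigcup_n\,\mathcal U_{M,n}$ with $\mathcal U_{M,n}$ open (by joint continuity of $(f,u,x)\mapsto S_{n,u}f(x)$), and prove density of each $\mathcal U_M$ by a purely local perturbation: move $u_0$ slightly into $G_0$, place disjoint bumps of height $\varepsilon/2$ on the $n$ distinct orbit points $x_0+ku$, recenter to zero mean, and align the sign with $S_{n,u}f_0(x_0)$ so no cancellation occurs. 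This avoids coboundaries, Rokhlin towers and all measure estimates, because residuality in the product only requires approximating one triple at a time; the price is that your argument yields nothing about the size of $\{(u,x):(f,u,x)\in\mathcal R^*\}$ beyond category, whereas the paper's sets $H_l\supset\{jv\}\times E_l$ carry the quantitative measure information reused elsewhere (e.g.\ in the proof of Theorem \ref{thm:individualresults} and the measure statements). One small point to make airtight: membership in every $\mathcal U_M$ gives for each $M$ some witness $n_M$ with $|S_{n_M,u}f(x)|>M\psi(n_M)$, and you should note that (since $\psi\ge1$ and $|S_{n,u}f(x)|\le n\|f\|_\infty$) these $n_M$ necessarily tend to infinity, so the $\limsup$ over $n\to\infty$ is indeed $+\infty$; alternatively define $\mathcal U_M=\bigcup_{n\ge M}\mathcal U_{M,n}$, which your density argument covers verbatim since you are free to take $n$ as large as you wish.
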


We remark that, by the Kuratowski-Ulam theorem, Theorem \ref{thm:residualtwovariables} implies that there exists a residual set $\mathcal R\subset\mathcal C_0(G)$ 
such that, for every $f\in\mathcal R$, the set ${\mathfrak E}_\ppp(f)$ is residual in $G^2$. 

\medskip

The last possibility is to fix $x\in G$ and allow $u$ to vary. Without loss of generality, we may assume that $x=0$. Again, topologically speaking, the typical growth of $S_{n,u}f(0)$ is not better than $o(n)$.

\begin{corollary}\label{cor:individualresults}
Suppose that  $\psi:\NN\to\NN$ satisfies $\psi(n)=o(n)$. There exists a residual set $\mathcal R\subset\mathcal C_0(G)$ such that for any $f\in\mathcal R$, the set  $\{u\in G;\ (u,0)\in {\mathfrak E}_{\ppp}(f)\}$ is residual in $G$.
\end{corollary}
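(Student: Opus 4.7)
My plan is to deduce the corollary from a stronger Baire-category statement on $\mathcal C_0(G)\times G$ via the Kuratowski--Ulam theorem. For $n,M\in\NN$, introduce
$$V_{n,M}=\{(f,u)\in\mathcal C_0(G)\times G\ ;\ \exists N\geq n,\ |S_{N,u}f(0)|>M\ppp(N)\}$$
and
$$B=\bigcap_{n,M\in\NN}V_{n,M}=\{(f,u)\in\mathcal C_0(G)\times G\ ;\ (u,0)\in\mathfrak E_\ppp(f)\}.$$
Because the map $(f,u)\mapsto S_{N,u}f(0)=\sum_{k=0}^{N-1}f(ku)$ is jointly continuous, each $V_{n,M}$ is open, and the main task is to show that $V_{n,M}$ is dense in $\mathcal C_0(G)\times G$.

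To do so I will lean on Theorem \ref{thm:residualtwovariables} together with the translation-invariance of the Birkhoff sums. Introduce the analogous open sets $\widetilde V_{n,M}\subset\mathcal C_0(G)\times G^2$ defined by the existence of some $N\geq n$ with $|S_{N,u}f(x)|>M\ppp(N)$. Since $\bigcap_{n,M}\widetilde V_{n,M}\supseteq\mathcal R^*$ is residual in the Baire space $\mathcal C_0(G)\times G^2$, each $\widetilde V_{n,M}$ is itself dense (in a Baire space a residual countable intersection of open sets forces each of them to be dense). Now fix a nonempty basic open set $U_1\times U_2\subset\mathcal C_0(G)\times G$ centred at $(f_0,u_0)$, and let $W$ be a neighbourhood of $0\in G$ to be chosen. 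By density, $\widetilde V_{n,M}$ meets $U_1\times U_2\times W$ at some $(f,u,x)$ with $|S_{N,u}f(x)|>M\ppp(N)$ for a suitable $N\geq n$. Setting $g:=\tau_x f$, where $\tau_x f(y):=f(y+x)$, one has
$$S_{N,u}g(0)=\sum_{k=0}^{N-1}f(ku+x)=S_{N,u}f(x),$$
so that $(g,u)\in V_{n,M}$. Moreover $u\in U_2$ and
$$\|g-f_0\|_\infty\leq\|\tau_x(f-f_0)\|_\infty+\|\tau_x f_0-f_0\|_\infty=\|f-f_0\|_\infty+\|\tau_x f_0-f_0\|_\infty,$$
and the uniform continuity of $f_0$ on the compact group $G$ allows one to shrink $W$ so that the second term becomes as small as we wish. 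This places $g$ inside $U_1$ and proves that $V_{n,M}$ is dense, so $B$ is a residual $G_\delta$-set in $\mathcal C_0(G)\times G$.

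The Kuratowski--Ulam theorem applied to $B$ now yields a residual set $\mathcal R\subset\mathcal C_0(G)$ such that, for every $f\in\mathcal R$, the section $B_f=\{u\in G\ ;\ (u,0)\in\mathfrak E_\ppp(f)\}$ is residual in $G$, as desired. The only non-routine point in this plan is the translation step: the neighbourhood $W$ must be selected after $(f_0,u_0)$ and $U_1$, depending on the modulus of continuity of the centre $f_0$; apart from this, the argument is a direct two-step application of the Baire category theorem.
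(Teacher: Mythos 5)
Your argument is correct in substance and uses the same three ingredients as the paper (Theorem \ref{thm:residualtwovariables}, the Kuratowski--Ulam theorem, and the translation identity $S_{N,u}(\tau_x f)(0)=S_{N,u}f(x)$), but it arranges them differently. The paper applies Kuratowski--Ulam directly to the three-variable residual set $\mathcal R^*$ to extract a single point $x\in G$ and a residual $\mathcal R\subset\mathcal C_0(G)$ whose $u$-sections at that fixed $x$ are residual, and only then translates the whole family, replacing each $f\in\mathcal R$ by its translate by $x$ (translation being an isometric bijection of $\mathcal C_0(G)$, this preserves residuality), thereby reducing to $x=0$ in one line. You instead unwind $\mathcal R^*$ into the dense open sets $\widetilde V_{n,M}$, perform the translation at the level of individual witnesses to prove directly that the sets $V_{n,M}\subset\mathcal C_0(G)\times G$ are dense, and then apply Kuratowski--Ulam once to the resulting residual $G_\delta$ set $B$. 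Both routes work; the paper's is shorter, while yours has the mild advantage of producing explicitly a dense $G_\delta$ set in $\mathcal C_0(G)\times G$ at $x=0$ and of using Kuratowski--Ulam only once rather than (implicitly) twice. One small repair is needed in your density step: after meeting $\widetilde V_{n,M}$ inside $U_1\times U_2\times W$ you can only conclude $\|g-f_0\|_\infty\le\|f-f_0\|_\infty+\|\tau_xf_0-f_0\|_\infty$, and since $\|f-f_0\|_\infty$ may be as large as the radius of $U_1$, the translated function $g$ need not lie in $U_1$ as written; you should instead intersect $\widetilde V_{n,M}$ with $B(f_0,r/2)\times U_2\times W$, where $r$ is the radius of a ball contained in $U_1$, and choose $W$ so that $\|\tau_xf_0-f_0\|_\infty<r/2$ for all $x\in W$. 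With this trivial adjustment the proof is complete.
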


\medskip

We finally come back to irrational rotations where we would like to get more precise statements. Let us fix $\alpha\in\RR\backslash \QQ$ and set 
$$\mathcal F_\psi(f)=\left\{x\in\TT;\ \limsup_n \frac{|S_{n,\alpha}f(x)|}{\psi(n)}<+\infty\right\}.$$
When $\psi(n)=n^\nu$, $\nu\in(0,1)$, we simply denote by $\mathcal F_\nu(f)$ the set $\mathcal F_\psi(f)$.
We already know by the results mentioned before Theorem \ref{thm:individualresults} that $\lambda\left(\mathcal F_\psi(f)\right)=0$ for $f$
in a residual subset of $\mathcal C_0(\TT)$, where $\lambda$ is the Lebesgue measure on $\TT$. 
  It turns out that a much stronger result is true: generically, these sets have zero Hausdorff dimension! 

\begin{theorem}\label{thm:continuous}
 For any $\psi:\NN\to\NN$ with $\psi(n)=o(n)$, 
there exists a residual subset $\mathcal R$ of $\mathcal C_0(\TT)$ such that, for any $f\in\mathcal R$, 
$\dim_{\mathcal H}(\mathcal F_\psi(f))=0.$
\end{theorem}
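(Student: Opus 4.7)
My plan is to apply the Baire category theorem in $\mathcal C_0(\TT)$. Decompose
\[
\mathcal F_\psi(f) = \bigcup_{M,N \in \NN} F_{M,N}(f), \qquad F_{M,N}(f) := \bigcap_{n \geq N}\bigl\{x \in \TT : |S_{n,\alpha}f(x)| \leq M\psi(n)\bigr\},
\]
a countable union of closed subsets of $\TT$. Since Hausdorff dimension is countably stable and $\mathcal H^s(A)=0$ is equivalent to $\mathcal H^s_\infty(A)=0$, it suffices to prove that for every $k,m,M,N \in \NN$ the set
\[
\mathcal U_{k,m,M,N} := \bigl\{f \in \mathcal C_0(\TT) : \mathcal H^{1/k}_\infty(F_{M,N}(f)) < 1/m\bigr\}
\]
is open and dense in $\mathcal C_0(\TT)$; the desired residual set is then $\mathcal R := \bigcap_{k,m,M,N} \mathcal U_{k,m,M,N}$.

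Openness follows from the upper semicontinuity of the compact-valued map $f \mapsto F_{M,N}(f)$: if $f_n \to f$ uniformly and $x_n \to x$ with $x_n \in F_{M,N}(f_n)$, then for each fixed $j \geq N$ the continuity of $f \mapsto S_{j,\alpha} f(x)$ gives $|S_{j,\alpha} f(x)| \leq M\psi(j)$, so $x \in F_{M,N}(f)$. Thus any finite open cover of the compact set $F_{M,N}(f_0)$ with total $(1/k)$-content $<1/m$ still covers $F_{M,N}(f)$ for every $f$ in a sufficiently small $C^0$-neighbourhood of $f_0$.

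Density is the main technical step: given $f_0 \in \mathcal C_0(\TT)$ and $\delta > 0$, one builds $f$ with $\|f-f_0\|_\infty < \delta$ such that $F_{M,N}(f)$ admits a cover of $(1/k)$-content $<1/m$. The construction exploits the continued fraction convergents $p_\ell/q_\ell$ of $\alpha$ via $\|q_\ell\alpha\| \leq 1/q_{\ell+1}$ and the geometric-sum identity
\[
S_{n,\alpha}\sin(2\pi q_\ell x) = \mathrm{Im}\Bigl(e^{2\pi i q_\ell x}\cdot\frac{e^{2\pi i n q_\ell\alpha}-1}{e^{2\pi i q_\ell\alpha}-1}\Bigr),
\]
which attains magnitude $\sim q_{\ell+1}\,|\sin(2\pi q_\ell x + \varphi)|$ at a suitable $n \sim q_{\ell+1}$. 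One takes a multiscale perturbation
\[
g(x) = \sum_{j=1}^J \delta_j\, \sin(2\pi q_{\ell_j} x), \qquad \sum_{j=1}^J \delta_j < \delta,
\]
with rapidly increasing $\ell_1 < \cdots < \ell_J$, and evaluates $S_{n_j,\alpha}(f_0+g)$ at times $n_j \sim q_{\ell_j+1}$ chosen so that the $j$-th mode dominates every other contribution. Each such evaluation forces on any $x \in F_{M,N}(f_0+g)$ an independent constraint $|\sin(2\pi q_{\ell_j} x + \varphi_j)| \leq \epsilon_j$, and $\epsilon_j \to 0$ since $\psi(n)/n \to 0$. The intersection of the $J$ constraints is a Cantor-like set whose $(1/k)$-content can be driven below $1/m$ by choosing $J$ large enough that $\prod_{j<J}\epsilon_j$ overcomes the factor $q_{\ell_J}^{1-1/k}$ arising from the covering count at the last scale.

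The main obstacle is the quantitative balancing of parameters in the density step. The amplitudes $\delta_j$, the frequencies $q_{\ell_j}$ and the times $n_j$ must be chosen simultaneously so that $\sum_j \delta_j < \delta$, so that at each $n_j$ the $j$-th mode dominates both the other modes of $g$ and the residual $|S_{n_j,\alpha}f_0|$ (which requires a preliminary approximation of $f_0$ by a trigonometric polynomial, for which Birkhoff sums are uniformly bounded in $n$), and so that the Cantor-like intersection has sufficiently small $(1/k)$-content. These conditions are tightly coupled and demand a careful inductive construction depending on $k,m,M,N$ and on the continued fraction of $\alpha$.
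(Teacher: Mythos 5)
Your Baire framework (the decomposition $\mathcal F_\psi(f)=\bigcup_{M,N}F_{M,N}(f)$ into closed sets, the open--dense sets $\mathcal U_{k,m,M,N}$, and the semicontinuity argument for openness) is correct, but the density step, which you rightly call the heart of the matter, has a genuine gap, and with the perturbation you propose it cannot be closed for all $\alpha$ and all $\psi$. A single constraint $|\sin(2\pi q_{\ell}x+\varphi)|\le\epsilon$ confines $x$ to about $2q_{\ell}$ intervals of length about $\epsilon/q_{\ell}$ spread over the whole circle, a set whose $s$-content is of order $q_{\ell}^{1-s}\epsilon^{s}$, i.e.\ large. Intersecting the constraints from your $J$ scales, the natural cover of the resulting Cantor-like set at the finest scale has $s$-content of order $q_{\ell_J}^{1-s}\,\epsilon_J^{s}\prod_{j<J}\epsilon_j$ (and using a coarser level $J'$ only replaces $J$ by $J'$). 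The factor $q_{\ell_{J'}}^{1-s}$ cannot in general be beaten by $\prod_{j<J'}\epsilon_j$: each $\epsilon_j$ is bounded below by roughly $M\psi(n_j)/(\delta_j n_j)$, because the amplification of the mode $q_{\ell_j}$ is at most $\min\bigl(n,\tfrac1{2\|q_{\ell_j}\alpha\|}\bigr)\approx\min(n,q_{\ell_j+1})$; since $\sum_j\delta_j<\delta$ forces $\delta_j\to0$ and $\psi(n)/n$ may tend to $0$ arbitrarily slowly (take $\psi(n)=\lceil n/\log\log n\rceil$), the $\epsilon_j$ decay very slowly, while the usable denominators are dictated by $\alpha$: domination of the $j$-th mode over the earlier ones forces $q_{\ell_j+1}\gtrsim q_{\ell_{j-1}+1}/(\delta_j\epsilon_j)$, and for a Liouville $\alpha$ (say $q_{\ell+1}\ge e^{q_{\ell}}$) the frequencies you are forced to use grow like a tower of exponentials. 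In that regime $q_{\ell_{J'}}^{1-1/k}$ outruns $\prod_{j<J'}\epsilon_j$ at every level, so the asserted ``choose $J$ large enough that $\prod_{j<J}\epsilon_j$ overcomes $q_{\ell_J}^{1-1/k}$'' is simply not available; the tight coupling you mention is not merely delicate, it is contradictory for such $\alpha$ and $\psi$.

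The paper avoids this by changing the shape of the perturbation rather than balancing these parameters: in Lemma \ref{lem:continuous} the perturbing function is (up to tiny interpolation slivers of freely chosen length $\eta$) constant equal to $\pm\veps$ on the towers $\Delta_j^{(n)}$, $\Delta_j^{(n+1)}$ of the partition of Section \ref{sec:partition}. Then for a \emph{single} time $m$ (chosen from $\psi$ alone, with $m\veps>C\psi(m)$) the Birkhoff sum has modulus $m\veps\ge C\psi(m)$ outside a set consisting of only $O(m)$ intervals of length $d_n$, $O(m)$ intervals of length $d_{n+1}$, and $O(q_n+q_{n+1})$ slivers of length $\eta$; choosing $n$ large and then $\eta$ small makes its $\mathcal H^{s}_{\delta}$-content arbitrarily small in one stroke, uniformly in the Diophantine type of $\alpha$ and in $\psi$. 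The key point your construction misses is that the exceptional set should be a \emph{few long} intervals (their number independent of $q_{n+1}$, apart from slivers whose length is a free parameter), not $\sim q_{\ell}$ short intervals spread over $\TT$. If you replace your multiscale sine perturbation by such a tower-based perturbation (plus the standard reduction of $f_0$ to a coboundary/trigonometric polynomial with bounded Birkhoff sums, as you already anticipate), your open--dense packaging via the sets $\mathcal U_{k,m,M,N}$ does give a valid alternative presentation of the paper's $\limsup E_{l_k}$ argument.
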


We then do a similar study for H\"older functions $f\in\mathcal C_0^\xi(\TT)$, $\xi\in(0,1)$. Recall that a function $f$ belongs to $\mathcal C_0^\xi(\TT)$ if it has zero mean and if there exists a constant $C>0$ such that, for all $x,y\in\TT$, 
$$|f(x)-f(y)|\leq C|x-y|^\xi.$$
The infimum of such constants  $C$ is denoted by $\textrm{Lip}_\xi(f)$.

For a function $f\in\mathcal C_0^\xi(\TT)$, we have better bounds on $S_{n,\alpha}f(x)$ depending on $\xi$ and on  
 the 
arithmetical properties of $\alpha$. Indeed, it is known (see \cite[Chapter 2, Theorem 5.4]{KuiNie}) that $|S_{n,\alpha}f(x)|\leq n \cdot \textrm{Lip}_\xi(f) \left(D^*_n(\alpha)\right)^{\xi}$
where $D^*_n(\alpha)$ is the discrepancy of the sequence $(\alpha,2\alpha,\dots,n\alpha)$ defined by 
$$|D^*_n(\alpha)|=\sup_{I\subset\TT} \left|\frac{\textrm{card}\{1\leq i\leq n;\ i\alpha\in I\}}{n}-|I|\right|.$$
For instance, if $\alpha$ has type 1 (for example, if $\alpha$ is an irrational algebraic number), using the well-known estimates 
of the discrepancy, we get that $|S_{n,\alpha}f(x)|=O(n^{1-\xi+\veps})$ for all $\veps>0$. In other words, for all $\nu>1-\xi$, $\mathcal F_\nu(f) =\TT$. 
We investigate the case $\nu\leq 1-\xi$ and we show that   the Hausdorff dimension of $\mathcal F_\nu (f) $ cannot always be large.

\begin{theorem}\label{thm:holder}Let  $\xi\in(0,1)$.
There exists $f\in\mathcal C^{\xi}_0(\TT)$ such that, for all $\nu\in(0,1-\xi)$, 
$$\dim_{\mathcal H}(\mathcal F_\nu(f))\leq\sqrt{\frac{\xi}{1-\nu}}.$$
\end{theorem}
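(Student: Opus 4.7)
I would construct $f$ as a lacunary trigonometric series adapted to the convergents of the fixed rotation number $\alpha$. Writing $(p_k/q_k)_{k\ge 0}$ for the convergents, I select a rapidly increasing subsequence $(n_k)$ of indices and set
\[
f(x)=\sum_{k\ge 1} q_{n_k}^{-\xi}\cos\bigl(2\pi q_{n_k}x + \phi_k\bigr),
\]
with phases $\phi_k$ prescribed along the way. Because $q_{j+1}/q_j$ is bounded below by the golden ratio, $(q_{n_k})$ is lacunary, and the amplitude decay $q_{n_k}^{-\xi}$ combined with the standard Weierstrass-type estimate places $f$ in $\mathcal C_0^{\xi}(\TT)$; the zero-mean condition is automatic.

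Next I turn the inclusion $x\in\mathcal F_\nu(f)$ into a sharp Diophantine constraint at each scale. Pick $N_k$ with $N_k\|q_{n_k}\alpha\|\approx 1/2$, so that $|D_{N_k}(q_{n_k}\alpha)|\asymp\|q_{n_k}\alpha\|^{-1}\asymp q_{n_k+1}$, where $D_N(\theta)=\sum_{j=0}^{N-1}e^{2\pi ij\theta}$. Expanding
\[
S_{N_k,\alpha}f(x)=\sum_{j\ge 1}q_{n_j}^{-\xi}\,\mathrm{Re}\bigl(e^{i(\phi_j+2\pi q_{n_j}x)}D_{N_k}(q_{n_j}\alpha)\bigr),
\]
the lacunarity of $(q_{n_k})$ makes the $j\ne k$ contributions subordinate to the $j=k$ term (this is controlled by taking the gaps $n_{k+1}-n_k$ large enough), so the inequality $|S_{N_k,\alpha}f(x)|\le CN_k^\nu$ reduces to
\[
|\cos(2\pi q_{n_k}x+\Psi_k)|\le C'\,q_{n_k+1}^{\nu-1}q_{n_k}^{\xi}=:\eta_k,
\]
with $\Psi_k=\phi_k+\arg D_{N_k}(q_{n_k}\alpha)$. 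Letting $A_k\subset\TT$ be the union of the $\sim q_{n_k}$ arcs of length $\sim\eta_k/q_{n_k}$ cut out by this inequality, I obtain $\mathcal F_\nu(f)\subset\bigcap_{k\ge K_0}A_k$.

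The last step, and the principal one, is to bound $\dim_{\mathcal H}\bigcap_{k\ge K_0}A_k\le\sqrt{\xi/(1-\nu)}$. I use a two-scale cover: inside any arc of $A_k$ (of length $\eta_k/q_{n_k}$) there are at most $\lceil q_{n_{k+1}}\eta_k/q_{n_k}\rceil+1$ arcs of $A_{k+1}$, whence
\[
\mathcal H^s\bigl(\textstyle\bigcap_{k\ge K_0}A_k\bigr)\le C\bigl(q_{n_{k+1}}\eta_k+q_{n_k}\bigr)\bigl(\eta_{k+1}/q_{n_{k+1}}\bigr)^s
\]
for every large $k$. Substituting $\eta_k\sim q_{n_k+1}^{\nu-1}q_{n_k}^{\xi}$ and choosing the subsequence growth so that the two terms on the right balance, the exponent optimisation gives the threshold $s=\sqrt{\xi/(1-\nu)}$. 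The main obstacle is making this two-scale count truly rigorous: the bound $q_{n_{k+1}}\eta_k/q_{n_k}$ is only the \emph{expected} density of $A_{k+1}$-arcs inside an $A_k$-arc, and preventing pathological alignments—where many arcs of $A_{k+1}$ cluster inside a single arc of $A_k$—forces one to specify the phases $\phi_k$ so that the successive $A_k$ lie in geometric general position. A secondary complication is that a \emph{single} $f$ must deliver the bound for every $\nu\in(0,1-\xi)$; the subsequence growth must therefore be selected once and for all in a way that the resulting exponent is at most $\sqrt{\xi/(1-\nu)}$ uniformly in $\nu$.
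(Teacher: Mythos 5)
Your reduction of $x\in\mathcal F_\nu(f)$ to the arc conditions $|\cos(2\pi q_{n_k}x+\Psi_k)|\le\eta_k$ with $\eta_k\sim q_{n_k+1}^{\nu-1}q_{n_k}^{\xi}$ is fine (and the ``pathological alignment'' worry is moot, since the arcs of $A_{k+1}$ are evenly spaced with gap $\tfrac1{2q_{n_{k+1}}}$, so the per-arc count is deterministic), but the final step --- ``the exponent optimisation gives the threshold $s=\sqrt{\xi/(1-\nu)}$'' --- is asserted, not derived, and it fails precisely in the hard case. The theorem must hold for \emph{every} fixed irrational $\alpha$; take $\alpha$ of bounded type (e.g.\ the golden mean), so $q_{n+1}\asymp q_n$. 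Then $\eta_k\asymp q_{n_k}^{\nu+\xi-1}$ and $A_k$ consists of $\asymp q_{n_k}$ arcs of length $\ell_k\asymp q_{n_k}^{\nu+\xi-2}$; since $q_{n_k}=\ell_k^{-1/(2-\nu-\xi)}$, any cover extracted from a single $A_k$ already costs exponent $\tfrac1{2-\nu-\xi}$, so the only hope is interaction between consecutive scales. But your own ``lacunarity makes the $j\ne k$ terms subordinate'' requirement blocks this: at time $N_k\asymp q_{n_k}$ the terms $j>k$ contribute $\asymp N_k q_{n_{k+1}}^{-\xi}$, which must be $O(N_k^\nu)$, forcing $q_{n_{k+1}}\gtrsim q_{n_k}^{(1-\nu)/\xi}$ (and the $j<k$ terms force $q_{n_{k+1}}\gtrsim q_{n_k}^{(1-\xi)/\nu}$). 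Carrying out the multiscale count $m_{k+1}\le m_k(2q_{n_{k+1}}\ell_k+2)$ with $q_{n_{k+1}}=q_{n_k}^{\beta}$ gives the exponent $\frac{\beta+\nu+\xi-2}{(\beta-1)(2-\nu-\xi)}$, which is increasing in $\beta$ and, at the smallest admissible $\beta=(1-\nu)/\xi$, equals $\frac{1-\xi}{2-\nu-\xi}$. For small $\xi$ this is $\ge\tfrac12-o(1)$ while the target $\sqrt{\xi/(1-\nu)}$ tends to $0$; so for bounded-type $\alpha$ and, say, small $\xi$ or small $\nu$, the inclusion $\mathcal F_\nu(f)\subset\bigcap_k A_k$ simply cannot yield the claimed bound, no matter how the subsequence or phases are chosen. (The regime your construction does handle well --- $q_{n_k+1}$ enormously larger than $q_{n_k}$ --- is exactly the easy, Liouville-type case.)

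The structural reason is that one frequency per scale produces an exceptional set made of \emph{many arcs of a single intermediate length}, which has the wrong covering profile. The paper instead works with the Denjoy--Koksma tower partition $\{\Delta_j^{(n)}\}\cup\{\Delta_j^{(n+1)}\}$ and a stage-$n$ function with profile $\pm|x-a_j|^{\xi}$ on each tower level, constant in sign along blocks of $q_{n+1}/2$ consecutive levels. Then for all $x$ outside an exceptional set covered by a \emph{mixture} of about $q_{n+1}^{\delta_0}$ intervals of length $\asymp q_{n+1}^{-1}$ and about $q_{n+1}$ intervals of length $\asymp q_{n+1}^{-\gamma_0}$, the Birkhoff sum over $m=\lfloor q_{n+1}^{\delta_0}\rfloor$ steps is $\gtrsim m\,d_n^{\gamma_0\xi}$; choosing $\gamma_0=1/\delta_0=\sqrt{(1-\nu')/\xi}$ makes both families cost the same exponent $\delta_0=\sqrt{\xi/(1-\nu')}$, and in the bounded-type case a second, interleaved construction on the $\Delta^{(n+1)}_j$ with exponents $\delta_1,\gamma_1$ depending on $\tau_n$ (where $q_{n+2}=q_n^{\tau_n}$) handles the intervals your single-scale set cannot. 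Finally, the all-$\nu$ uniformity and the passage to a single $f$ are obtained there by a Baire-category argument in $(\mathcal E^{\xi},\|\cdot\|_\infty)$ over a dense sequence of $\nu$'s together with coboundaries, rather than by one explicit lacunary series; that part of your plan is repairable, but the quantitative gap above is not, within your framework.
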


This theorem is in stark contrast with Theorem 4.1 in \cite{FanSchme03}. In this last paper, a similar study of fast Birkhoff averages of subshifts is done.
In this case, the sets which correspond to $\mathcal F_\nu(f)$ always have  maximal dimension.

\section{Useful lemmas}
 
In this section, we provide lemmas which will be used several times for the proof of our main theorems.
The first one allows to approximate step functions by continuous functions. 
In the statement of the theorem we use the standard notation $\mathbf 1_{B}(x)$
for the function which equals $1$ if $x\in B$ and equals 0 if not.

\begin{lemma}\label{lem:stepapproximation}
   Let $\Omega$ be a compact metric space, let $\mu$ be a continuous Borel probability measure on $\Omega$.  \ 
   Let $g$ be a step function such that $\int_\Omega g(x)d\mu(x)=0$ and $\delta>0$. Then there exists $f\in\mathcal C_0(\Omega)$ such that $\|f\|_\infty\leq 2\Vert g\Vert_\infty$ and 
 $f=g$ except on a set of measure at most $\delta$.
\end{lemma}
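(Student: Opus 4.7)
The plan is to build $f$ in two stages: first produce a continuous approximation $\tilde f$ of $g$ that satisfies $\|\tilde f\|_\infty\leq\|g\|_\infty$ and agrees with $g$ off a set of small $\mu$-measure, then subtract a small continuous correction $\phi$ to kill the mean of $f:=\tilde f-\phi$ while only disturbing $\tilde f$ on a further small set.

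Write $g=\sum_{i=1}^N c_i\mathbf{1}_{B_i}$ with $B_1,\dots,B_N$ a Borel partition of $\Omega$, and fix an auxiliary $\eta\in(0,\delta/5)$. By inner regularity of $\mu$ (automatic on a compact metric space) choose pairwise disjoint compact $K_i\subset B_i$ with $\mu(B_i\setminus K_i)<\eta/N$, and set $K=\bigcup K_i$. The function $h=\sum c_i\mathbf{1}_{K_i}$ is continuous on the closed set $K$ since the $K_i$ are pairwise at positive distance, so Tietze's extension theorem supplies a continuous $\tilde f:\Omega\to\RR$ with $\tilde f|_K=h$ and $\|\tilde f\|_\infty\leq\|g\|_\infty$. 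Since $\int g\,d\mu=0$ and $\tilde f=g$ on $K$, the defect $c:=\int\tilde f\,d\mu=\int_{\Omega\setminus K}(\tilde f-g)\,d\mu$ satisfies $|c|\leq 2\|g\|_\infty\mu(\Omega\setminus K)<2\|g\|_\infty\eta$.

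For the correction, I will locate a compact $M$ sitting in an open $U$ with $\mu(M)>2\eta$ and $\mu(U)<\delta-\eta$. Urysohn's lemma then gives a continuous $u:\Omega\to[0,1]$ with $u\equiv 1$ on $M$ and $u\equiv 0$ off $U$, so $\int u\,d\mu\geq\mu(M)>2\eta$. Setting $\phi:=(c/\int u\,d\mu)\,u$ and $f:=\tilde f-\phi$ yields $f\in\mathcal C_0(\Omega)$ with $\|\phi\|_\infty\leq |c|/(2\eta)\leq\|g\|_\infty$, hence $\|f\|_\infty\leq 2\|g\|_\infty$, and $f=g$ on $K\setminus U$ whose complement has measure $<\eta+(\delta-\eta)=\delta$.

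The only non-routine step is the existence of the pair $(M,U)$, and this is where the continuity (non-atomicity) of $\mu$ enters. Since $\mu(B(x,r))\to 0$ as $r\to 0$ at every $x$, a compactness argument covers $\Omega$ by finitely many balls of $\mu$-measure $<\eta/2$, which can be refined to a Borel partition $\Omega=\bigsqcup A_j$ with $\mu(A_j)<\eta/2$. The partial unions of the $A_j$ then realise measures $0<\mu(S_1)<\dots<\mu(S_M)=1$ with consecutive gaps $<\eta/2$, so (since $\delta-3\eta>\eta/2$ by the choice $\eta<\delta/5$) some partial union $B$ has $\mu(B)\in(2\eta,\delta-\eta)$. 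Inner and outer regularity of $\mu$ then replace $B$ by a compact $M\subset B$ and open $U\supset M$ retaining both inequalities, completing the construction.
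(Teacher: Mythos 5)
Your proof is correct, and while it follows the same two-stage skeleton as the paper's argument (first a continuous function bounded by $\Vert g\Vert_\infty$ agreeing with $g$ off a small set, then a normalized bump to restore the zero mean), both stages are implemented differently. For the first stage the paper keeps, for each level set $A_i$, a compact $K_i\subset A_i$ \emph{and} an open $U_i\supset A_i$, sums the Urysohn functions $a_i\varphi_i$, and then truncates at $\pm\Vert g\Vert_\infty$ to recover the sup-norm bound; you instead extend the locally constant function $\sum_i c_i\mathbf 1_{K_i}$ from the compact set $K=\bigcup_i K_i$ by Tietze, which gives $\Vert \tilde f\Vert_\infty\le\Vert g\Vert_\infty$ for free and dispenses with the open sets and the truncation. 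For the second stage the paper supports the bump on a ball with $0<\mu(B(a,r))\le\mu(B(a,2r))<\delta/2$ and must choose $\veps$ small \emph{after} $r$, so that the correction term, of size at most $\left|\int k\,d\mu\right|/\mu(B(a,r))$, stays below $\Vert g\Vert_\infty$; you instead pin the measure of the bump's support into the window $(2\eta,\delta-\eta)$ by a pigeonhole over partial unions of a partition into pieces of measure $<\eta/2$ (this is where non-atomicity enters for you, just as it enters for the paper when shrinking $\mu(B(a,2r))$), so the bound $\Vert\phi\Vert_\infty\le|c|/2\eta\le\Vert g\Vert_\infty$ is automatic, with no a posteriori smallness adjustment. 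One small housekeeping point: your pigeonhole needs the window $(2\eta,\delta-\eta)$ to intersect $(0,1]$, so you should additionally take $\eta<1/4$, say, or note that one may assume $\delta\le 1$ (harmless, since for $\delta\ge 1$ the exceptional-set condition is vacuous); with that, every step checks out.
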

\begin{proof}
 Let $\veps>0$ be very small and $\{a_1,\dots,a_n\}$ be the finite set $g(\Omega)$. We can write $g=\sum_{i=1}^na_i \mathbf 1_{A_i}$ where $A_i=\{x\in\Omega\ ;\ g(x)=a_i\}$. Since the measure $\mu$ is regular, we can find compact sets $K_1,\dots,K_n$ and open sets
 $U_1,\dots,U_n$ such that 
 \[ K_i\subset A_i\subset U_i\]
 \[ \mu(U_i)-\veps \leq \mu(A_i)\leq \mu(K_i)+\veps. \]
 By Urysohn's lemma, one may find functions $\varphi_i\in\mathcal C(\Omega)$ such that
 \[ 0\leq\varphi_i\leq 1,\quad \varphi_i=1\textrm{ on }K_i,\quad \varphi_i=0\textrm{ outside }U_i.\]
We then set $h=\sum_{i=1}^n a_i\varphi_i$. It is clear that 
\[ \mu\left(\left\{a_i\mathbf 1_{A_i}\neq a_i\varphi_i\right\}\right)\leq \mu\left(U_i\backslash K_i\right). \]
Therefore,
\[
 \mu\left(\left\{h\neq g\right\}\right)\leq \sum_{i=1}^n \mu\left(U_i\backslash K_i\right)\leq 2n\veps.
\]

If $k=\max\left(-\Vert g\Vert_\infty,\min\left(h,\Vert g\Vert_\infty\right)\right)$, we now have $\Vert k\Vert_\infty\leq \Vert g\Vert_\infty$ and
\[
 \mu\left(\left\{k\neq g\right\}\right)\leq  \mu\left(\left\{h\neq g\right\}\right)\leq 2n\veps.
 \]

The function  $k$ is continuous but is not necessarily in $\mathcal C_0(\Omega)$. Nevertheless, we observe that
\[
\left|\int_{\Omega}  k(x)d\mu(x)\right|=\left|\int_{\Omega} \left( k(x)-g(x)\right)d\mu(x)\right| \leq \Vert k-g\Vert_\infty \mu\left(\left\{k\neq g\right\}\right)\leq 4n\veps\Vert g\Vert_\infty
\]
and we can modify $k$ to obtain a zero mean.  Let $a\in\Omega$ and   $r>0$ be such that $ 0<\mu\left(B(a,r)\right)\leq \mu\left(B(a,2r)\right)<\delta/2$ and let $\varphi_0\in\mathcal C(\Omega)$ with $\varphi_0=1$ on the closed ball  $\bar B(a,r)$, $\varphi_0=0$
outside $B(a,2r)$ and $0\leq\varphi_{0}\leq 1$. We set 
\[ f=k-\frac{\int_\Omega kd\mu}{\int_\Omega \varphi_0 d\mu}\varphi_0.\]
Then $f\in\mathcal C_0(\Omega)$, $f=g$ except on a set of measure at most $2n\veps+\delta/2$ and
\[\|f\|_\infty\leq \Vert g\Vert_\infty+\frac{\left|\int_\Omega kd\mu\right|}{\int_\Omega \varphi_0 d\mu}\leq \Vert g\Vert_\infty+\frac{4n\veps\Vert g\Vert_\infty}{\mu\left(B(a,r)\right)}.\]
Choosing $\veps>0$ sufficiently small then gives the result.
\end{proof}

Our second lemma is a way to construct continuous functions  in $\mathcal C_0(\Omega)$ with large Birkhoff sums on large subsets. We give it in our general context of a uniquely ergodic transformation
$T$  on an infinite compact metric space $\Omega$  with  non-atomic ergodic measure $\mu$. As usual, $\psi:\NN\to\NN$ satisfies $\psi(n)=o(n)$.
We denote by $E^{c}$ the complement of the set $E$.

\begin{lemma}\label{lem:bigpartialsums}
Let $J,M\in\NN$, $C>0$, $  \veps  >0$. Then there exist $f\in\mathcal C_0(\Omega)$, $m\geq M$ and a compact set $E\subset \Omega$
 such that $\|f\|_\infty\leq  \veps  $, $\mu(E)>1-  \veps  $ and
 \begin{align*}
  &\forall x\in E,\ \forall j\in\{1,\dots,J\},\quad \left|S_{m,T^j}f(x)\right|\geq C\psi(m).
 \end{align*}
\end{lemma}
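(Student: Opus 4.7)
My plan is to construct $f$ in two stages. First, I shall build a step function $\tilde g$ whose Birkhoff sums along $T,T^{2},\dots,T^{J}$ are already large on a set of $\mu$-measure close to $1$. Then I will smooth $\tilde g$ by means of Lemma~\ref{lem:stepapproximation}. The step function will live on a Rokhlin--Kakutani tower for $T$: since $\mu$ is non-atomic and $T$-invariant, $T$ is aperiodic modulo $\mu$, so for any prescribed height $N$ and any $\delta>0$ there is a measurable base $B$ with $B,TB,\dots,T^{N-1}B$ pairwise disjoint and $\mu\!\left(\bigcup_{i=0}^{N-1}T^{i}B\right)>1-\delta$.

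The combinatorial core of the construction goes as follows. Pick $m\ge M$ large enough that $m\veps/2\ge C\psi(m)$, which is possible since $\psi(n)=o(n)$. Fix an even integer $N\ge 20J^{2}m/\veps$ and apply the Rokhlin lemma with this $N$ and $\delta=\veps/4$. Define
\[\tilde g(x)=\begin{cases}+\veps/2&\text{if }x\in T^{i}B,\ 0\le i<N/2,\\ -\veps/2&\text{if }x\in T^{i}B,\ N/2\le i<N,\\ 0&\text{elsewhere},\end{cases}\]
a step function with $\|\tilde g\|_{\infty}=\veps/2$ and zero mean. I call an index $i\in\{0,\dots,N-1\}$ \emph{$j$-good} if the arithmetic progression $\{i,i+j,\dots,i+j(m-1)\}$ is contained in $[0,N/2)$ or in $[N/2,N)$; for such $i$ and any $x\in T^{i}B$, $S_{m,T^{j}}\tilde g(x)=\pm m\veps/2$. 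The set of $j$-bad indices has cardinality at most $2j(m-1)$, so the set $I^{*}$ of indices that are $j$-good for \emph{every} $j\in\{1,\dots,J\}$ satisfies $|I^{*}|\ge N-2J^{2}m\ge N(1-\veps/10)$.

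With $E_{0}=\bigcup_{i\in I^{*}}T^{i}B$, a direct count gives $\mu(E_{0})\ge (1-\veps/4)(1-\veps/10)>1-\veps/2$, and $|S_{m,T^{j}}\tilde g(x)|=m\veps/2\ge C\psi(m)$ for every $x\in E_{0}$ and $j\in\{1,\dots,J\}$. I will then apply Lemma~\ref{lem:stepapproximation} to $\tilde g$ with parameter $\delta_{1}=\veps/(4Jm)$ to produce $f\in\mathcal C_{0}(\Omega)$ with $\|f\|_{\infty}\le 2\|\tilde g\|_{\infty}=\veps$ and $\mu(\{f\neq\tilde g\})\le\delta_{1}$. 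The ``orbit-bad'' set $B^{*}=\bigcup_{j=1}^{J}\bigcup_{k=0}^{m-1}T^{-jk}(\{f\neq\tilde g\})$ has measure at most $Jm\delta_{1}=\veps/4$ by $T$-invariance of $\mu$, so $S_{m,T^{j}}f=S_{m,T^{j}}\tilde g$ on $E_{0}\setminus B^{*}$. Extracting a compact subset $E\subset E_{0}\setminus B^{*}$ with $\mu(E)>1-\veps$ by regularity of $\mu$ will finish the proof.

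The main obstacle is the combinatorial design of $\tilde g$: the partial sums along the $J$ distinct arithmetic progressions of steps $1,\dots,J$ must all be large on the same large set, while respecting the zero-mean and $L^{\infty}$ constraints. Taking the two constant blocks of $\tilde g$ of length much larger than $Jm$ solves this, since a progression of length $m$ and common difference at most $J$ can straddle the midpoint only for an $O(Jm/N)$-fraction of starting indices. After this, replacing $\tilde g$ by a continuous $f$ via Lemma~\ref{lem:stepapproximation} and accounting for the orbits that meet the perturbation $\{f\neq\tilde g\}$ are routine.
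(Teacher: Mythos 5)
Your proof is correct and follows essentially the same route as the paper: a Rokhlin tower split into two halves carrying values $\pm\veps/2$, removal of the starting levels whose length-$m$, step-$j$ progressions straddle the midpoint or exit the tower, approximation by a continuous zero-mean function via Lemma~\ref{lem:stepapproximation}, and deletion of the orbits meeting the perturbation set. The only cosmetic differences are bookkeeping (your explicit ``$j$-good index'' count versus the paper's truncation of each half by $mJ$ levels) and that you pass to a compact subset by inner regularity where the paper simply takes the closure of the good set, which is equally valid since the Birkhoff-sum inequality is a closed condition.
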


\begin{proof}
Set $ \overline{\veps}  =\veps/3.$ 
  We begin by  fixing {$m\in\NN$, any integer greater than $M$,} and such that $m  \overline{\veps} \geq C\psi(m)$.
  Let $n\gg m$ to be fixed  later. We then consider a Rokhlin tower  associated to $T$, $2n$ and $ \overline{\veps}$ (see for instance \cite{eisner2015operator}). Namely, we consider
 $A\subset \Omega$ such that the sets $T^k (A)$, $0\leq k\leq 2n-1$, are pairwise disjoint and $\mu\left(\bigcup_{k=0}^{2n-1}T^k (A)\right)>1- \overline{\veps}$. 
We then consider a function $g$ equal to $ \overline{\veps} $ on $\bigcup_{k=0}^{n-1}T^k(A)$, equal to $-  \overline{\veps} $ on 
 $\bigcup_{k=n}^{2n-1}T^k(A)$ and equal to zero elsewhere. 
 
 We set 
 \[F= \left(\bigcup_{k=0}^{n-1-mJ}T^k(A)\right)\cup\left(\bigcup_{k=n}^{2n-1-mJ}T^k(A)\right):=F_1\cup F_2.\]
 Then, for any $x\in F_1$, for any $ \ell \leq m-1$, for any $j\in\{1,\dots,J\}$, 
\[T^{\ell j}(x)\in\bigcup_{ k=0}^{n-1}T^{k} (A).\]
It follows that 
$S_{m,T^j}g(x) =m \overline{\veps}$.
In the same way, for any $x\in F_2$, for any $j\in\{1,\dots,J\}$, $S_{m,T^j}g(x) =-m \overline{\veps}$.

Finally, for any $x\in F$, for any $j\in\{1,\dots,J\}$,
\[\left\vert S_{m,T^j}g(x) \right\vert=m \overline{\veps}\geq C\psi(m).\]
Moreover, 
\[\mu(F)=2\left( n-mJ\right)\mu(A)\geq2\left(n-mJ\right)\cdot \frac{1-\overline{\veps}}{2n}\geq1-2\overline{\veps}\]
provided $n$ is large enough.
 
 Thanks to Lemma \ref{lem:stepapproximation}, we approximate $g$ by a continuous function $f\in\mathcal C_0(\Omega)$ with $\|f\|_\infty\leq 2  \overline{\veps} $ and $f=g$ except
 on a set $\mathcal N$ of measure $\eta>0$, with $mJ\eta<  \overline{\veps} $. Fix $j\in\{1,\dots,J\}$. 
 Then $S_{m,T^j} f(x)=S_{m,T^j}g(x)$ except if $x\in\bigcup_{k=0}^{m-1}T^{-kj}(\mathcal N)$. Let  $\mathcal N'=\bigcup_{k=0}^{m-1}\bigcup_{j=1}^JT^{-kj}(\mathcal N)$.
 Then $\mu(\mathcal N')\leq mJ\eta< \overline{\veps}$. Moreover, $\left|S_{m,T^j}f(x)\right|\geq C\psi(m)$ for all $j\in\{1,\dots,J\}$ and all $x\in F\cap\mathcal N'^c=:E_0$. 
 Clearly, $\mu(E_{0}) > 1-3 \overline{\veps} =1-\veps$.  We conclude by taking for $E$ the closure of $E_0$.
\end{proof}

\section{Fast and slow points of Birkhoff sums - I}

In this section, we prove Theorems \ref{thm:individualresults} and \ref{thm:residualtwovariables}. Their proofs share many similarities and depend heavily on Lemma \ref{lem:bigpartialsums} applied in suitable situations. 
We will also need that if $T$ is a uniquely ergodic transformation on $\Omega$, then the set of $\mathcal C_0(\Omega)$-coboundaries  for $T$, namely the set of functions $g-g\circ T$ for some $g\in\mathcal C_0(\Omega)$, is dense in 
$\mathcal C_0(\Omega)$ 
(see for instance \cite[Lemma 1]{LiVol97}).
It is convenient to work with a coboundary since its Birkhoff sums are uniformly bounded.

\begin{proof}[Proof of Theorem \ref{thm:individualresults}]
Let $(h_l)$ be a dense sequence of coboundaries in $\mathcal C_0(\Omega)$ and let $C_l>0$ be such that 
 $\sup_n \left\|S_{n,T}h_l\right\|_\infty\leq C_l.$
 Let $f_l$, $E_l$ and $m_l$ be given by Lemma \ref{lem:bigpartialsums} for $C=l+C_l+1$, $M=l$, $J=1$, $\veps=1/l$.  
 We set $g_l=h_l+f_l$ and we observe that, for $x\in E_l$, 
 $$\left|S_{m_l,T}g_l(x)\right|\geq (l+C_l+1)\psi(m_l)-C_l\geq (l+1)\psi(m_l).$$
 Since $E_l$ is compact and $g_l$ is continuous, we can choose $\delta_l>0$ and an open set $F_l\subset \Omega$ containing $E_l$ such that, 
 for any $f\in B(g_l,\delta_l)$, for any $x\in F_l$, 
 \begin{equation}\label{eq:individual1}
  \left|S_{m_l,T}f(x)\right|\geq l\psi(m_l).
 \end{equation}
 Let $\mathcal R=\bigcap_{L\geq 1}\bigcup_{l\geq L}B(g_l,\delta_l)$ which is a residual set in $\mathcal C_0(\Omega)$ and pick $f\in\mathcal R$. There exists an increasing sequence $(l_k)$
 going to $+\infty$ such that $f\in B(g_{l_k},\delta_{l_k})$ for all $k$. We set $F =\limsup F_{l_k}=\bigcap_{K\geq 1}\bigcup_{k\geq K}F_{l_k}$. 
 Since $\mu(F_{l_{k}})\geq \mu(E_{l_{k}})\geq 1-\frac{1}{l_{k}}$  the set $F$ has full measure. Moreover, since $\mu$ has full support and $\mu\left(\bigcup_{k\geq K}F_{l_k}\right)=1$ for all $K$, $F$ is also residual in $\Omega$.
Finally   if $x$ belongs to $F$, then \eqref{eq:individual1} is true for infinitely many $l$, which shows Theorem \ref{thm:individualresults}.
\end{proof}

 In the next proof $\Omega$ is replaced by the compact connected metric abelian group $G$ and we consider uniquely ergodic translations $T_{v}$.  We recall that for these translations, all non-constant characters $\gamma$ are $\mathcal C_0$-coboundaries:
 they can be written as  $\gamma=\gamma_0\circ T_v-\gamma_0$, where $\gamma_0=\frac1{\gamma(v)-1}\gamma$.
 
\begin{proof}[Proof of Theorem \ref{thm:residualtwovariables}]
Since $G$ is compact we can choose a sequence $(h_l)$ 
 of trigonometric polynomials which is dense in $\mathcal C_0(G)$ (see \cite[Section 1.5.2]{Rudin}). Let $v\in G_0$,
 that is $T_{v}$ is ergodic.
 Since $h_l$ is a $\mathcal C_0$-coboundary
for all $T_{jv}$, $j=1,\dots,l$, there
 exists $C_l>0$ such that 
 $$\sup_n \sup_{j\in\{1,\dots,l\}}\left\|S_{n,j v}h_l\right\|_\infty\leq C_l.$$
 Let $f_l$, $E_l$ and $m_l$ be given by Lemma \ref{lem:bigpartialsums} for $T=T_v$, $C=C_l+l+1$, $M=l$, $J=l$, $\veps=1/l$. 
 Set $g_l=h_l+f_l$ and observe that, for $x\in E_l$, $j\in\{1,\dots,l\}$, 
 $$\left|S_{m_l,j v}g_l(x)\right|\geq (l+C_l+1)\psi(m_l)-C_l\geq (l+1)\psi(m_l).$$
 Since $\{j v;\ j=1,\dots,l\}\times E_l$ is compact in $G\times G$ and $g_l$ is continuous, we can choose $\delta_l>0$ and an open set $H_l\subset G\times G$ such that
 $\{j v;\ j=1,\dots,l\}\times E_l\subset H_l$ and, for any $(f,u,x)\in B(g_l,\delta_l)\times H_l$, 
 \begin{equation}\label{eq:residualtwovariables}
  \left|S_{m_l,u}f(x)\right|>l\psi(m_l).
 \end{equation}
 We now observe that $\bigcup_{l\geq L}\{g_l\}\times \left\{T^j v;\ j=1,\dots,l\right\}\times E_l$ is dense in $\mathcal C_0(G)\times G\times G$ for any $L\geq 1$. Hence, 
 ${\mathcal R}^*= \bigcap_{L\geq 1}\bigcup_{l\geq L}B(g_{l},\ddd_l)\times {H}_l$ is a residual subset of $\mathcal C_0(G)\times G^2$ 
 and any $(f,u,x)\in\mathcal R^*$ satisfies that $(u,x)$ belongs to ${\mathfrak E}_{\ppp}(f)$ since \eqref{eq:residualtwovariables} is true for infinitely many integers $l$.
\end{proof}

\begin{proof}[Proof of Corollary \ref{cor:individualresults}]
This corollary follows easily from Theorem \ref{thm:residualtwovariables} and from the Kura\-towski-Ulam theorem. Indeed, we know that there exist a residual set $R\subset\mathcal C_0(G)$ and $x\in G$ such that, for all $f\in\mathcal R$, $\{u\in G;\ (u,x)\in{\mathfrak E}_\ppp(f)\}$ is residual. Now, setting $\mathcal R'=\{f(\cdot-x);\ f\in\mathcal R\}$, for any $f\in\mathcal R'$, $\{u\in G;\ (u,  0)\in{\mathfrak E}_\ppp(f)\}$ is residual.
\end{proof}

\section{Fast and slow points of Birkhoff sums - II}\label{sec:as}

We turn to the   proof of Theorem \ref{thm:measuretwovariables}.
Its first part heavily depends on the following Menshov-Rademacher inequality (see for instance \cite[Chapter 4]{doob}). 
\begin{lemma}\label{lem:menshov}
Let $X_1,\dots,X_N$ be a sequence of orthonormal random variables and $c_1,\dots,c_N$ be a sequence of real numbers. Then 
$$\mathbb E\left(\max_{1\leq n\leq N}\left(\sum_{j=1}^n c_j X_j\right)^2\right)\leq \log^2_{2}(4N)\sum_{n=1}^N c_n^2.$$
\end{lemma}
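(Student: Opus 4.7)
The plan is to run the classical Rademacher--Menshov dyadic decomposition argument. Set $K=\lceil \log_2 N\rceil$, so that $N\leq 2^K$, and for $0\le m\le K$ and $l\geq 1$ introduce the dyadic block sums
$$Y_{m,l}=\sum_{j=(l-1)2^m+1}^{l2^m}c_jX_j,$$
with the convention that $c_jX_j=0$ for $j>N$. Given $n\in\{1,\dots,N\}$, I would write its binary expansion as $n=2^{i_1}+2^{i_2}+\cdots+2^{i_r}$ with $i_1>i_2>\cdots>i_r\geq 0$, and define the increasing sequence $n_0=0$, $n_k=2^{i_1}+\cdots+2^{i_k}$. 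Since each $2^{i_j}$ with $j<k$ is a strictly higher power of $2$ than $2^{i_k}$, the integer $n_{k-1}$ is a multiple of $2^{i_k}$, so the interval $\{n_{k-1}+1,\dots,n_k\}$ coincides with one of the dyadic blocks of length $2^{i_k}$. It follows that $S_n=\sum_{k=1}^{r}Y_{i_k,l_k}$ for appropriate indices $l_k$, expressing $S_n$ as a sum of at most $K+1$ block sums belonging to distinct dyadic scales.

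Next I would square this identity and apply the Cauchy--Schwarz inequality, which, since the decomposition involves at most $K+1$ terms (one per scale), gives
$$S_n^2\leq (K+1)\sum_{k=1}^{r}Y_{i_k,l_k}^2\leq (K+1)\sum_{m=0}^{K}\max_{l}Y_{m,l}^2.$$
Taking the maximum over $n$, then taking expectations, and using the trivial bound $\max_l Y_{m,l}^2 \leq \sum_l Y_{m,l}^2$, one arrives at
$$\mathbb{E}\left(\max_{1\leq n\leq N}S_n^2\right)\leq (K+1)\sum_{m=0}^{K}\sum_{l}\mathbb{E}(Y_{m,l}^2).$$

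By orthonormality of the $X_j$'s, $\mathbb{E}(Y_{m,l}^2)$ equals the sum of $c_j^2$ over the corresponding block, and for each fixed $m$ those blocks partition $\{1,\dots,N\}$ (up to the padding by zero), so $\sum_l \mathbb{E}(Y_{m,l}^2) = \sum_{j=1}^{N} c_j^2$. Hence
$$\mathbb{E}\left(\max_{1\leq n\leq N}S_n^2\right)\leq (K+1)^2\sum_{n=1}^{N}c_n^2,$$
and since $K+1=\lceil\log_2 N\rceil+1\leq\log_2 N+2=\log_2(4N)$, this yields the stated bound. I do not expect any substantive obstacle: the proof is purely combinatorial once the dyadic setup is in place. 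The only point that deserves care is the elementary but crucial verification that $n_{k-1}$ is divisible by $2^{i_k}$, as it is this divisibility that aligns the telescoping differences with genuine dyadic blocks and justifies the Cauchy--Schwarz application to a sum of $K+1$ (rather than $n$) terms.
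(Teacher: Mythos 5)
Your proof is correct: the dyadic block decomposition, the divisibility of $n_{k-1}$ by $2^{i_k}$, the Cauchy--Schwarz step with at most $K+1$ scales, and the final bound $K+1\leq\log_2(4N)$ all check out, giving exactly the stated inequality. The paper does not prove this lemma but simply cites Doob, and your argument is the classical Rademacher--Menshov bisection proof found there, so it is essentially the same approach as the cited source.
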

\begin{proof}[Proof of Theorem \ref{thm:measuretwovariables} part {(i)}]
Recall that $\int_G f(x)d\mu(x)=0$. Without loss of generality, we suppose $\|f\|_2=1$ and we consider $X_k(u,x)=f(x+ku)$ as a random variable 
on the probability space $(G^2,\mu\otimes\mu)$. Next we show that $(X_k)_{k\geq 1}$
is an orthonormal sequence. Indeed, let $\sum_{\gamma\in\hat G}\hat f(\gamma)\gamma$ be the Fourier
expansion of $f$. Then, for $k,j\geq 1$, 
\[
\int_{G^2}X_k\overline{X_j}d\mu\otimes d\mu=\sum_{\gamma,\gamma'\in\hat G}\hat f(\gamma)\overline{\hat{f}(\gamma')}\int_{G}\gamma(x)\overline{\gamma'(x)}d\mu(x)\int_G \gamma(ku)\overline{\gamma'}(ju)d\mu(u).
\]
Now, $\int_G \gamma(x)\overline{\gamma'(x)}d\mu(x)$ is zero provided $\gamma\neq\gamma'$ and is equal to 1 otherwise. Moreover, let us fix $\gamma\in\hat G$ and set $\gamma_k(u)=\gamma(ku)$, 
$\gamma_j(u)=\gamma(ju)$. Then $\int_G \gamma_k\overline{\gamma_j}d\mu=0$ except if $\gamma_k=\gamma_j$, namely except if $\gamma^{k-j}=1$. If $k\neq j$,  using that $\hat G$ is torsion-free since 
$G$ is compact and connected, this can only happen if $\gamma=1$. Therefore, we have shown that
\[\int_{G^2}X_k\overline{X_j}d\mu\otimes d\mu =\left\{\begin{array}{ll}
           \sum_{\gamma}|\hat f(\gamma)|^2=1&\textrm{ if }k=j\\
           |\hat f(1)|^2=0&\textrm{ otherwise.}
          \end{array}\right.
\]

Applying Lemma \ref{lem:menshov} with $c_j=1$ yields
\begin{equation}
\int_{G^2}\max_{1\leq n\leq N}|S_{n,u}f(x)|^2d\mu(u)\otimes d\mu(x) \leq \log^2_{2}(4N)N.
\label{eq:asnottoofast1}
\end{equation}
Let $\nu>1/2$ and for $k\geq 1$,
\[ 
E_k=\left\{(u,x)\in G^2;\ \exists n\in \{2^k,\dots,2^{k+1}-1\},\  |S_{n,u}f(x)|\geq n^\nu\right\}.
\]
Using Markov's inequality and \eqref{eq:asnottoofast1}, we get 
\[
\mu\otimes\mu(E_k)\leq \mu\otimes\mu\left( \max_{1\leq n\leq 2^{k+1}}|S_{n,u}f(x)|\geq 2^{\nu k}\right)\leq 
\frac1{2^{2k\nu}}\log_2^2{ \left( 4 \cdot  2^{k+1}\right) \cdot  2^{k+1}}\leq Ck^{2}2^{k(1-2\nu)}.
\]

Since $\sum_k \mu\otimes\mu(E_k) <\oo  $, the Borel-Cantelli lemma implies that $\mu\otimes\mu(\limsup_k E_k)=0$ and the conclusion follows.
\end{proof}
\begin{remark}
In fact, the same proof shows that, for any $\veps>0$, 
\[
\mu\otimes\mu\left(\left\{(u,x)\in G^2;\ \limsup_n \frac{|S_{n,u}f(x)|}{n^{\frac 12}\log^{\frac{3}2+\veps}(n)}\geq 1\right\}\right)=0.
\]
\end{remark}


To prove the second part of Theorem \ref{thm:measuretwovariables}, we shall use both a Baire category and a probabilistic argument. The probabilistic part is based on the the following lemma,
which is a consequence of the proof of the law of the iterated logarithm   done in \cite{Bil95}   (the important point here is that we need a choice of $N$ which does not depend
on the particular choice of the sequence).

We recall that a random variable $X:(\Omega,\mathcal A,P)\to\RR$ has a Rademacher distribution if $P(X=1)=P(X=-1)=1/2$.

\begin{lemma}\label{lem:LIL}
 Let $\veps>0$ and $M\in\NN$. There exists $N\geq M$ such that, for any sequence $(Y_k)$ of independent Rademacher variables defined on the same probability space
 $(\Omega,\mathcal A,P)$, 
 $$P\left(\sup_{M\leq n\leq N}\frac{|\sum_{k=1}^n Y_k(\omega)|}{\sqrt{n\log\log n}}>\frac 12\right)>1-\veps.$$
\end{lemma}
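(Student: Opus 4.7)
The plan is to observe that Rademacher is a specific distribution, so the distribution of $(Y_1, Y_2, \dots)$ and hence of the supremum in question is \emph{canonical}: the probability
\[ p_{M,N} := P\left(\sup_{M\leq n\leq N}\frac{|\sum_{k=1}^n Y_k|}{\sqrt{n\log\log n}}>\tfrac 12\right) \]
depends only on $M$ and $N$, not on the ambient probability space or the specific realization of the $(Y_k)$. So it suffices to exhibit \emph{one} independent Rademacher sequence for which, given $\veps$ and $M$, we can find $N\geq M$ with $p_{M,N}>1-\veps$; the same $N$ will then work universally.

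Next I would invoke the classical law of the iterated logarithm as stated in \cite{Bil95}: on a canonical dyadic model carrying independent Rademachers $(Y_k)$, almost surely
\[ \limsup_{n\to\infty}\frac{|\sum_{k=1}^n Y_k|}{\sqrt{2n\log\log n}}=1. \]
In particular, $\limsup_n \frac{|\sum_{k=1}^n Y_k|}{\sqrt{n\log\log n}}\geq\sqrt 2>\tfrac 12$ almost surely, which means that for every fixed $M$,
\[ P\!\left(\sup_{n\geq M}\frac{|\sum_{k=1}^n Y_k|}{\sqrt{n\log\log n}}>\tfrac 12\right)=1. \]

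The events $A_N=\left\{\sup_{M\leq n\leq N}\frac{|\sum_{k=1}^n Y_k|}{\sqrt{n\log\log n}}>\tfrac 12\right\}$ form an increasing sequence whose union is exactly the event above, so by continuity of the probability measure, $P(A_N)\to 1$ as $N\to\infty$. Hence there exists $N\geq M$ with $P(A_N)>1-\veps$, and by the distributional remark at the outset this same $N$ works for \emph{any} independent Rademacher sequence $(Y_k)$ defined on any probability space $(\Omega,\A,P)$.

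The only conceptually delicate point is the universality of $N$, but this is immediate once one notices that the event inside $P$ is measurable with respect to $(Y_1,\dots,Y_N)$ and that the law of this finite-dimensional vector is the uniform measure on $\{-1,1\}^N$ in every model. There is no real analytic obstacle; the proof is essentially a packaging of the LIL together with this distributional invariance observation.
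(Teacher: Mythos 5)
Your proof is correct. Note that the paper does not actually write out a proof of this lemma: it only remarks that the statement ``is a consequence of the proof of the law of the iterated logarithm done in \cite{Bil95}'', the parenthetical point being that $N$ must not depend on the particular sequence. Your argument supplies exactly the missing details, and in fact does so slightly more economically than the authors' hint suggests: you only need the \emph{statement} of the LIL, not its proof. Indeed, the event $A_N=\bigl\{\sup_{M\leq n\leq N}|\sum_{k=1}^n Y_k|/\sqrt{n\log\log n}>\tfrac12\bigr\}$ is measurable with respect to $(Y_1,\dots,Y_N)$, whose joint law is the uniform measure on $\{-1,1\}^N$ in any model, so $P(A_N)$ is model-independent; and in one fixed model the almost sure lower bound $\limsup_n |\sum_{k\leq n}Y_k|/\sqrt{2n\log\log n}=1>\tfrac12/\sqrt2$ together with continuity from below along the increasing events $A_N$ gives $P(A_N)\to 1$, hence some $N\geq M$ with $P(A_N)>1-\veps$, which then works universally. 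The only (harmless) caveat is that $\sqrt{n\log\log n}$ only makes sense for $n\geq 3$; this is an issue of the statement rather than of your proof, and in the paper the lemma is applied only with $M$ large, so it is immaterial.
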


The following lemma is the key point of our proof.

\begin{lemma}\label{lem:key}
Let $\veps\in(0,1)$, $C>0$ and $M\in\NN$. There exist $f\in\mathcal C_0(G)$, $N>M$ and $F\subset G^2$ with $\|f\|_\infty\leq \veps$,
$\mu\otimes\mu(F)>1-\veps$ and 
$$(u,x)\in F\implies \sup_{M\leq n\leq N}\frac{|S_{n,u}f(x)|}{n^{1/2}}\geq C.$$
\end{lemma}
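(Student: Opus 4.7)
The plan is to combine Lemma \ref{lem:LIL} (a uniform law-of-iterated-logarithm bound) with a Rokhlin tower construction in which the levels are filled with independent random signs. Heuristically, if the levels of a tall enough tower are marked with $\pm\frac{\veps}{4}$ at random, then for a typical pair $(u,x)$ the orbit $(x+ju)_{0\le j<N}$ visits pairwise distinct levels, so that $S_{n,u}\tg_\omega(x)$ is exactly $\frac{\veps}{4}$ times a random walk of length $n$; Lemma \ref{lem:LIL} then forces this walk to exceed $4C\sqrt n/\veps$ for some $n$ in the required range, with high probability.

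Concretely, I would first pick $M_0\ge M$ so large that $\frac{\veps}{4}\cdot\frac12\sqrt{\log\log M_0}\ge C$, and apply Lemma \ref{lem:LIL} to this $M_0$ and to the tolerance $\veps/4$ to produce the integer $N$ returned by the statement. Next, fix $v\in G_0$ and, for a tower height $H$ chosen much larger than $N/\veps$, use the Rokhlin--Halmos lemma to find a base $A\subset G$ such that $A,T_v(A),\dots,T_v^{H-1}(A)$ are pairwise disjoint and their union has measure at least $1-\veps/(8N)$; on a general compact connected metric abelian group one can moreover arrange $\mu(A-A)\le \veps/(8N)$ by taking $A$ to be a small ball in a translation-invariant metric. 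Draw i.i.d.\ Rademacher signs $\xi_0,\dots,\xi_{H-1}$ on some probability space $(\Omega,P)$ and set $\tg_\omega=\frac{\veps}{4}\sum_{k=0}^{H-1}\xi_k(\omega)\,\mathbf 1_{T_v^k(A)}$, corrected by subtracting its (negligibly small) mean so that $\int \tg_\omega\,d\mu=0$ and $\|\tg_\omega\|_\infty\le \veps/3$.

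Let $F_0$ denote the set of $(u,x)\in G^2$ such that $ju\notin A-A$ for every $1\le j\le N-1$ and $x+ju$ lies in the tower for every $0\le j<N$. A Fubini computation using $\mu(\{u:ju\in A-A\})=\mu(A-A)$ and $\mu(\{x:x+ju\notin\mathrm{tower}\})\le\veps/(8N)$ yields $\mu\otimes\mu(F_0)\ge 1-\veps/2$. For $(u,x)\in F_0$ the levels $k_0(u,x),\dots,k_{N-1}(u,x)$ visited by the orbit are pairwise distinct, so $(\xi_{k_j(u,x)})_j$ is an honest i.i.d.\ Rademacher sequence of length $N$, and Lemma \ref{lem:LIL} guarantees that with $P$-probability at least $1-\veps/4$ there exists $n\in[M_0,N]$ with $|S_{n,u}\tg_\omega(x)|\ge C\sqrt n$. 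Integrating over $(u,x)\in F_0$ and swapping the integrals, Fubini produces a deterministic realization $\omega_*$ for which the set of good $(u,x)$ has $\mu\otimes\mu$-measure at least $1-3\veps/4$.

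Finally, Lemma \ref{lem:stepapproximation} replaces the step function $\tg_{\omega_*}$ by a genuine $f\in\mathcal C_0(G)$ with $\|f\|_\infty\le\veps$, coinciding with $\tg_{\omega_*}$ outside a set $B$ of measure at most $\veps/(4N)$; by Fubini the $(u,x)$ whose $N$-orbit avoids $B$ form a set of $\mu\otimes\mu$-measure at least $1-\veps/4$, and on such $(u,x)$ the Birkhoff sums of $f$ and $\tg_{\omega_*}$ coincide for all $n\le N$. Intersecting with the good set from the previous paragraph yields the desired $F$. The main technical obstacle, I believe, is the Rokhlin base construction with $\mu(A-A)$ of the same order as $\mu(A)$: on $\TT^d$ a small cube is enough, but in full generality one has to appeal to the existence of an invariant metric on the compact connected abelian group $G$ (via the structure theorem) and take $A$ to be a small metric ball, so that $A-A$ is contained in the ball of twice the radius.
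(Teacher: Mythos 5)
Your overall architecture (independent random signs $\pm\veps$ attached to the cells of a finite measurable partition, the uniform LIL bound of Lemma \ref{lem:LIL} applied along orbits that visit pairwise distinct cells, Fubini to freeze a good $\omega$, then Lemma \ref{lem:stepapproximation} plus a union bound over the $N$ translates to pass to a continuous $f$) is exactly the paper's strategy. The gap is in how you manufacture the partition. You ask the Rokhlin--Halmos lemma for a base $A$ whose height-$H$ tower for $T_v$ fills all but $\veps/(8N)$ of $G$ \emph{and} satisfies $\mu(A-A)\le\veps/(8N)$, justified by ``take $A$ to be a small ball''. Rokhlin's lemma gives no control whatsoever on $A-A$ (its bases are typically unions of many spread-out pieces, and $A-A$ can have measure close to $1$), and the ball option is incompatible with the filling requirement: if $A$ is a ball of radius $r$, disjointness of $A, A+v,\dots,A+(H-1)v$ forces $jv\notin B(0,2r)$ for $1\le j<H$, and then the tower has measure $H\mu(B(\cdot,r))$, which in general cannot be pushed to $1-\veps/(8N)$. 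Already on $\TT$ with $v$ the golden rotation, an interval-based tower of \emph{any} height fills at most about $q_{n+1}\Vert q_n v\Vert\approx 0.72$ of the circle, uniformly in the height, so your set $F_0$ would lose a fixed proportion of mass rather than $O(\veps)$. (On $\TT$ one can repair this by taking $A$ to be an arithmetic progression of intervals obtained by slicing the two continued-fraction towers into blocks of height $H$, which keeps $\mu(A-A)=O(1/H)$; but no such structure is available on a general compact connected metric abelian group, so as written the key step of your construction does not go through.)

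The paper sidesteps this entirely: no tower is needed, because the only property you actually use is that any two points in the same cell differ by an element of a small set avoided by $ju$, $1\le j\le N$. So it covers $G$ by finitely many translates $x_k+\mathcal O$ of a small symmetric neighbourhood $\mathcal O$ of $0$, disjointifies them into cells $A_k$, splits each cell into halves $B_k\cup B_k'$ and uses $\varphi_k=\mathbf 1_{B_k}-\mathbf 1_{B_k'}$ (this gives exact zero mean for every $\omega$ and genuine $\pm1$ values, avoiding your mean-correction bookkeeping), and restricts $u$ to $E_{\mathcal O}=\{u:\ (j-j')u\notin2\mathcal O,\ 0\le j\neq j'\le N\}$, whose measure tends to $1$ as $\mathcal O$ shrinks since $ju\neq0$ for $u\in G_0$ and $\mu(G_0)=1$ ($G$ connected, $\hat G$ torsion-free). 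Within-cell differences lie in $2\mathcal O$, so for $u\in E_{\mathcal O}$ the points $x,x+u,\dots,x+(N-1)u$ fall into pairwise distinct cells, which is precisely the independence you wanted from the tower levels; from that point on your LIL/Fubini/approximation steps coincide with the paper's. If you replace your tower step by this covering argument (or prove the extra ``$\mu(A-A)$ small'' Rokhlin statement, which is a genuinely nontrivial claim you have not established), the proof is complete.
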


\begin{proof}
Without loss of generality, we may assume that $\sqrt{\log\log M}> 2C/\veps$. Lemma \ref{lem:LIL} gives us a value of $N$ associated
to $\veps$ and $M$. We then consider a sequence $(X_k)$ of independent Rademacher variables defined on the same probability space $(\Omega,\mathcal A,P)$.
We select a neighbourhood $\mathcal O$ of $0\in G$ so that, setting
\[
E_{{\mathcal O}}=\left\{ u\in G;\ 
(j'-j)u\notin 2\mathcal O\textrm{ for all } 0\leq j,j' \leq N,\ j\not=j' \right\},
\]

we have $\mu(E_\mathcal O)>1-\veps$. This is possible since, denoting by $(\mathcal O_l)$ a basis of neighbourhoods of $0$ in $G$, we have
$$G_0\subset\left\{u\in G;\ ku\not=0 \textrm{ for all }k\in\ZZ\backslash\{0\}\right\}\subset\bigcup_l E_{\mathcal O_l}.$$
By compactness of $G$, $G$ is contained in a finite union $(x_1+\mathcal O)\cup\cdots\cup (x_K+\mathcal O)$. We set $A_1=x_1+\mathcal O$ and, 
for $2\leq k\leq K$, $A_k=(x_k+\mathcal O)\backslash(A_1\cup\dots\cup A_{k-1})$. The sets $A_1,\dots,A_k$  provide a  Borelian partition of $G$. 

We then split each $A_k$ into a disjoint sum $A_k=B_k\cup B'_k$ with $\mu(B_k)=\mu(B'_k)=\mu(A_k)/2$. 
For $1\leq k \leq {K}$ 
define $\varphi_k$ by $\varphi_k=\left(\mathbf 1_{B_k}-\mathbf 1_{B'_k}\right)$. We finally put
$$g(x,\ooo)=\sum_{k=1}^{{K}}\veps X_k(\ooo)\fff_{k}(x)$$
so that
$$S_{n,u}g(x,\omega)=\veps\sum_{j=0}^{n-1}\sum_{k=1}^K X_k(\omega)\varphi_{k}(x+ju).$$

Let us fix $u\in E_\mathcal O$. For all $x\in G$ and all $j\in\{0,\dots,N-1\}$,
there exists exactly one integer $k\in\{1,\ldots,K\}$, that we will denote  by  $k(j,u,x)$, such that $\varphi_k(x+ju) \not =0$.
Hence, 
for $(u,x)\in E_\mathcal O\times G$ and $n\leq N$,
$$S_{n,u}g(x,\omega)=\veps\sum_{j=0}^{n-1}X_{k(j,u,x)} (\ooo) \varphi_{k(j,u,x)}(x+ju).$$ 
Moreover, for $j\neq j'$, the integers $k(j,u,x)$ and $k(j',u,x)$ are different:  otherwise,    $(j-j')u$ would  belong  to $2\mathcal O$. 

Applying Lemma \ref{lem:LIL} to the sequence $\big(X_{k(j,u,x)}\varphi_{k(j,u,x)}(x+ju)\big)_{0\leq j\leq N-1}$ which is a sequence of independent Rademacher variables, 
we get the existence of $\Omega_{u,x}\subset\Omega$ such that $P(\Omega_{u,x})>1-\veps$ and 
$$(u,x,\omega)\in E_{\mathcal O}\times G \times \Omega_{u,x}\implies
\sup_{M\leq n\leq N}\frac{|S_{n,u}g(x,\omega)|}{\sqrt{n\log\log n}}\geq\frac\veps2.$$
Hence
$$(u,x,\omega)\in E_{\mathcal O}\times G\times \Omega_{u,x}\implies
\sup_{M\leq n\leq N}\frac{|S_{n,u}g(x,\omega)|}{\sqrt{n}}\geq \frac\veps2{ \sqrt{\log\log M}}>C.$$

Keeping in mind that $\mu(E_{\mathcal O})>1-\veps$ holds as well, by Fubini's theorem  we can select and fix $\ooo\in\OOO$
 such that 
 \begin{equation}\label{*efk}
\mu\otimes\mu\Big (\Big \{ (u,x)\in G^{2};\sup_{M\leq n\leq N}\frac{|S_{n,u}g(x,\ooo)|}{\sqrt{n}}
>C 
\Big \}\Big )>(1-\veps)^2>1- 2\veps.
\end{equation}
Given $\delta>0$, according to Lemma \ref{lem:stepapproximation}, the function $g=g(\cdot,\ooo)$ can be approximated by a continuous function $f\in\mathcal C_0(G)$ such that $\Vert f\Vert_\infty\leq2\veps$ and which coincides with $g$ except in a set of measure less than $\delta/N$. It follows that for every $u\in G$ and for any $n\in\{M,\ldots,N\}$, $S_{n,u}f(x)=S_{n,u}g(x)$ except in a set of measure less than $\delta$. Finally, if $\delta$ is sufficiently small, inequality \eqref{*efk} is still satisfied if we replace $g$ by $f$. 
\end{proof}

\begin{proof}[Proof of Theorem \ref{thm:measuretwovariables}, part {(ii)}]
 Let $(h_l)$ be a sequence of trigonometric polynomials dense in $\mathcal C_0(G)$. For all $l\geq 1$ and all $u\in G_0$, 
 since $h_l$ is a $\mathcal C_0$-coboundary for $T_u$,
we know that $\sup_n \left\|S_{n,u}h_l\right\|_\infty<+\infty$. We then find 
 $G_l\subset G_0$ with $\mu(G_l)>1-1/l$ and $C_l>0$ such that, for all $u\in G_l$, $\sup_n \left\|S_{n,u}h_l\right\|_\infty\leq C_l$. 
 We apply Lemma \ref{lem:key} with $\veps=1/l$, $C=l+C_l+1$ and $M_l=l$. We get a function $f_l\in\mathcal C_0(G)$, an integer $N_l\geq M_l$
 and a set $F_l\subset G^2$. We define $g_l=h_l+f_l$ and $E_l=F_l\cap(G_l\times G)$ so that $\mu\otimes\mu(E_l)\geq 1-2/l$. 
 The way we constructed all these objects
 ensures that, for any $(u,x)\in  E_l$, 
 $$\sup_{M_l\leq n\leq N_l}\frac{|S_{n,u}g_l(x)|}{n^{1/2}}\geq l+1.$$
 This yields the existence of a $\delta_l>0$ such that, 
 for any $f\in B(g_l,\delta_l)$ and any $(u,x)\in E_l$,
 $$\sup_{M_l\leq n\leq N_l}\frac{|S_{n,u}f(x)|}{n^{1/2}}\geq l.$$
 We finally consider the residual set $\mathcal R=\bigcap_{L\geq 1}\bigcup_{l\geq L}B(g_l,\delta_l)$ and we pick $f\in\mathcal R$. There
 exists an increasing sequence $(l_k)$ such that $f\in B(g_{l_k},\delta_{l_k})$. Let $E=\limsup_k E_{l_k}$ which has full measure and pick $(u,x)\in E$. There
 exists a subsequence $(l'_k)$ of $(l_k)$ such that $(u,x)\in E_{l'_k}$ for all $k$. We then have
 $$\sup_{M_{l'_k}\leq n\leq N_{l'_k}}\frac{|S_{n,u}f(x)|}{n^{1/2}}\geq l'_k$$
 which allows us to conclude.
\end{proof}

\begin{remark}
The proof gives slightly more than announced: there exists a residual set $\mathcal R\subset \mathcal C_0(G)$ such that, for all $\veps\in(0,1/2)$ and all $f\in\mathcal R$, 
  $$\mu\otimes \mu\left(\left\{(u,x)\in G^2;\ \limsup_n \frac{|S_{n,u} f(x)|}{n^{1/2}(\log\log n)^{\frac 12-\veps}}=+\infty\right\}\right)=1.$$
\end{remark}

\section{Fast and slow points for irrational rotations on the circle}
Throughout this section, we fix $\alpha\in\mathbb R\backslash\mathbb Q$.
\subsection{A partition of $\TT$}\label{sec:partition}

To get an estimate of the Hausdorff dimension of $\mathcal F_\psi(f)$, which is more precise  than the result already obtained on its measure, 
we will need a refinement of Rokhlin towers specific to irrational rotations.
We shall use the following system of partitions of $\TT$ associated to the irrational rotation $R_\alpha$, as it is described for instance in \cite[Lecture 9, Theorem 1]{Sinaitopics}.
Let $(p_n/q_n)$ be the $n$-th convergent of $\alpha$ in its continued fraction expansion. Define
$$\Delta_0^{(n)}=\left\{
\begin{array}{ll}
 [0,\{q_n \alpha\})&\textrm{ if $n$ is even}\\
 \left[ \{q_n \alpha\},1\right)&\textrm{ if $n$ is odd.}
\end{array}
\right.$$
Denote also $\Delta_j^{(n)}=R_\alpha^j\big(\Delta_0^{(n)}\big)$. For any $n\geq 1$, the intervals $\Delta_j^{(n)}$, $0\leq j<q_{n+1}$ and $\Delta_j^{(n+1)}$, $0\leq j<q_n$, are
pairwise disjoint and their union is the whole $\TT$. We shall denote by $d_n$ the length of $\Delta_0^{(n)}$. It is well known that 
$$\frac1{2q_{n+1}}\leq d_n\leq\frac1{q_{n+1}}.$$

\subsection{Continuous functions}

The main step towards the proof of Theorem \ref{thm:continuous} is the following lemma   which improves partly Lemma \ref{lem:bigpartialsums}. 

\begin{lemma}\label{lem:continuous}
Let $M\in\mathbb N$, $C>0$, $s\in(0,1)$, $\delta>0$ and  $\veps>0$.
Then there exist  $f\in \mathcal C_0(\TT)$ with $\|f\|_\infty\leq \veps$, a compact set $E\subset\TT$, and an integer $m\geq M$ such that
\begin{equation}\label{eq:continuous1}
 \forall x\in E,\ |S_{m,\alpha}f(x)|\geq C\psi(m);
\end{equation}
\begin{equation}\label{eq:continuous2}
\mathcal H^s_\delta(E^c)<\veps .
\end{equation}
\end{lemma}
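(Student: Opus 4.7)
The plan is to adapt the Rokhlin-tower construction of Lemma~\ref{lem:bigpartialsums} to the specific dynamical partition of $\TT$ at scale $n$ described in Section~\ref{sec:partition}. The key new point is that all intervals $\Delta_j^{(n)}$ of that partition have length at most $1/q_{n+1}$, so a bad set formed of $O(m)$ such intervals has $s$-Hausdorff content at most $O(m)\,q_{n+1}^{-s}$, which, since $s<1$, can be made arbitrarily small by choosing $n$ large enough.

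First I would fix $\veps'=\veps/2$, choose $m\geq M$ large enough that $m\veps'\geq C\psi(m)$ (possible because $\psi(m)=o(m)$), and then pick $n$ so large that $d_n\leq\delta$, $q_n\geq 2m+2$, and $q_{n+1}^s\geq 8m/\veps$. I would then define a step function $g$ on $\TT$ using \emph{both} towers of the scale-$n$ partition: on the big tower, set $g=\veps'$ on $\Delta_j^{(n)}$ for $0\leq j<\lfloor q_{n+1}/2\rfloor$ and $g=-\veps'$ on the upper levels, and proceed analogously on the small tower (with one middle level set to $0$ if a height is odd, so that $g$ has zero mean by symmetry).

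For any $x$ whose orbit $x,R_\alpha x,\ldots,R_\alpha^{m-1}x$ stays in one fixed half of one tower one has $|S_{m,\alpha}g(x)|=m\veps'\geq C\psi(m)$. The complement of this safe set is contained in the union of the $m$ middle transition levels and the $m$ top levels of each tower, i.e.\ in at most $2m$ intervals of length $d_n$ (from the big tower) and $2m$ of length $d_{n+1}\leq d_n$ (from the small tower), so its $s$-Hausdorff $\delta$-content is bounded by $4m\,d_n^s\leq 4m/q_{n+1}^s<\veps/2$. Finally I would invoke Lemma~\ref{lem:stepapproximation} to produce $f\in\mathcal C_0(\TT)$ with $\|f\|_\infty\leq 2\veps'=\veps$ and $f=g$ except on a set $\mathcal N$. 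Inspecting the proof of Lemma~\ref{lem:stepapproximation} shows that $\mathcal N$ can be chosen as a union of $O(q_{n+1})$ arbitrarily thin intervals around the endpoints of the level sets of $g$; by taking them short enough, the set $\bigcup_{k=0}^{m-1}R_\alpha^{-k}(\mathcal N)$ has $\mathcal H^s_\delta$-content below $\veps/2$, and I would take $E$ to be the closure of the safe set minus this union.

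The main obstacle, which forces the doubled construction, is the small tower: the naive choice $g\equiv 0$ there would throw the whole small tower into $E^c$, but the small tower consists of $q_n$ intervals scattered over $\TT$ and its $s$-Hausdorff content can be of order $1$ (for instance when $\alpha$ has bounded partial quotients, $q_n/q_{n+2}$ stays bounded below), so this would destroy the required estimate. Running the same up-down construction on the small tower confines the bad set to the transition and top levels, where the covering by $\Delta_j^{(n)}$- and $\Delta_j^{(n+1)}$-intervals yields $\mathcal H^s_\delta(E^c)<\veps$.
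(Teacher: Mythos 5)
Your proof is correct and takes essentially the same route as the paper: both towers of the scale-$n$ dynamical partition carry the $\pm\veps$ profile split into lower/upper halves, the good set is the union of the levels whose $m$-orbit stays in a constant-sign block (giving $|S_{m,\alpha}f|=m\veps'\geq C\psi(m)$), and the bad set consists of $O(m)$ partition intervals of length at most $d_n$ plus thin transition intervals, so its $\mathcal H^s_\delta$-content is killed by taking $n$ large and the transitions thin. The only (cosmetic) difference is that the paper constructs the continuous function directly as a piecewise-affine profile with transition zones of width $\eta$, whereas you pass through a step function and Lemma~\ref{lem:stepapproximation}, correctly noting that on $\TT$ its proof can be arranged so that the exceptional set is a union of thin intervals (the lemma's statement alone, which controls only measure, would not suffice).
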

\begin{proof}
Let $m\geq M$ be such that $m\veps>C\psi(m)$. Let also $n$ be a large integer and consider the partition of $\TT$ described in Section \ref{sec:partition}: 
 $$\TT=\bigcup_{0\leq j<q_{n+1}}\Delta_j^{(n)}\cup \bigcup_{0\leq j<q_n}\Delta_j^{(n+1)},$$
where the convergents of $\aaa$ are $p_{n}/q_{n}$.
Since it will be easier to deal with even numbers we put $\widetilde q_{n}=2\lf q_{n}/2 \rf$, $n\in\NN$ which is the  greatest even integer less than $q_n$. Hence $\widetilde q_n$ and $\widetilde q_{n+1}$ are even. We define a continuous function $f$ with zero mean such that

 \begin{itemize}
 \item on $\Delta_j^{(n)}$, $0\leq j< \frac{\widetilde q_{n+1}}2$ and on 
 $\Delta_j^{(n+1)}$, $0\leq j< \frac{\widetilde q_n}2$, $f=\veps$ except on two very small intervals of size $\eta>0$
 where $f$ is affine to ensure that $f$ vanishes at the boundary of $\Delta_j^{(n)}$ and $\Delta_j^{(n+1)}$.
 \item on $\Delta_j^{(n)}$, $\frac{\widetilde q_{n+1}}2\leq j<\widetilde q_{n+1}$ and on $\Delta_j^{(n+1)}$, $\frac{\widetilde q_n}2\leq j<\widetilde q_n$, $f=-\veps$ except on two very small intervals of size $\eta>0$ where $f$ is affine to ensure that $f$ vanishes at the boundary of $\Delta_j^{(n)}$ and $\Delta_j^{(n+1)}$.
 \item if $x\not \in \bigcup_{0\leq j<\widetilde q_{n+1}}\Delta_j^{(n)}\cup \bigcup_{0\leq j<\widetilde q_n}\Delta_j^{(n+1)}$
we set $f(x)=0$.
 \end{itemize}

 We set $\Gamma_j^{(n)}$ (resp. $\Gamma_j^{(n+1)}$) the (largest) subinterval of $\Delta_j^{(n)}$ (resp. $\Delta_j^{(n+1)}$) such that $|f|=\veps$ and we let 
 $$E=\bigcup_{0\leq j< \frac{\widetilde q_{n+1}}2-m}\Gamma_j^{(n)}\cup\bigcup_{\frac{\widetilde q_{n+1}}2\leq j<\widetilde q_{n+1}-m}\Gamma_j^{(n)}\cup\bigcup_{0\leq j< \frac{\widetilde q_{n}}2-m}\Gamma_j^{(n+1)}\cup\bigcup_{\frac{\widetilde q_{n}}2\leq j<\widetilde q_n-m}\Gamma_j^{(n+1)}.$$
 If $x$ belongs to $E$, then $f(x+j \alpha)=f(x)$ for all $j=0,\dots,m-1$ and $|f(x)|=\veps$. Therefore,
 we have $|S_{m,\alpha}f(x)|=m\veps>C\psi(m)$. On the other hand, $E^c$ is the union of at most 
\begin{itemize}
\item $(2m+2)$ intervals of size ${d_n}$;
\item $(2m+2)$ intervals of size $d_{n+1}$;
\item $2(\widetilde q_{n+1}+\widetilde q_n)$ intervals of size $\eta$.
\end{itemize}
Hence we have
$$\mathcal H^s_\delta(E^c)\leq (2m+2) d_n^s+(2m+2) d_{n+1}^s +2(\widetilde q_{n+1}+\widetilde q_n) \eta^s<\veps$$
if we choose $n$ sufficiently large and then $\eta$ sufficiently small. 
\end{proof}
\begin{proof}[Proof of Theorem \ref{thm:continuous}] We mimic the proof of Theorem \ref{thm:individualresults}.  Recall that
$$\mathcal F_\psi(f)=\left\{x\in\TT;\ \limsup_n \frac{|S_{n,\alpha}f(x)|}{\psi(n)}<+\infty\right\}.$$
Let $(h_l)$ be a sequence of coboundaries which is dense in $\mathcal C_0(\TT)$.
Then for any $l\geq 1$, there exists $C_l>0$ such that $\sup_n\|S_{n,\alpha}h_l\|_\infty\leq C_l$.
Let $f_l$, $E_l$ and $m_l$ be given by Lemma \ref{lem:continuous} for $C=l+C_l+1$, $M=l$ and $\veps=s=\delta={1/l}$. We set $g_l=h_l+f_l$ and observe that, for $x\in E_l$, 
$$|S_{m_l,\alpha}g_l(x)|\geq (l+C_l+1)\psi(m_l)-C_l\geq (l+1)\psi(m_l).$$
There exists $\delta_l>0$ such that, for any $f\in B(g_l,\delta_l)$ and any $x\in E_l$, 
$$|S_{m_l,\alpha}f(x)|\geq l\psi(m_l).$$
Since the sequence $(g_l)$ is dense in $\mathcal C_0(\TT)$, $\mathcal R=\bigcap_{L\geq 1}\bigcup_{l\geq L}B(g_l,\delta_l)$ is a residual subset of $\mathcal C_0(\TT)$. Pick $f\in\mathcal R$.
There exists an increasing sequence $(l_k)$ such that $f\in B(g_{l_k},\delta_{l_k})$. We set $E=\limsup E_{l_k}$ and observe that, 
for any $x\in E$, 
$$\limsup_n \frac{|S_{n,\alpha}f(x)|}{\psi(n)}=+\infty.$$
Moreover, $E^c=\bigcup_{K\geq 1}\bigcap_{k\geq K}E^c_{l_k}$. For any $s\in (0,1)$, the properties of the sets $E_l$ ensure that $\mathcal H^s\left(\bigcap_{k\geq K}E^c_{l_k}\right)=0$. Since $\mathcal F_\psi\subset E^c$, we conclude that $\dim_{\mathcal H}(\mathcal F_\psi)\leq s$ and therefore $\dim_{\mathcal H}(\mathcal F_\psi)=0$.
\end{proof}

\subsection{H\"older functions}

  We now modify the previous construction to adapt it to H\"older continuous functions. 

\begin{lemma}\label{lem:holder}
 Let $M\in\NN$, $\nu\in(0,1)$, $\xi\in(0,1)$ with $\nu+\xi<1$, $A>0$, $\sqrt{\frac{\xi}{1-\nu}}< s\leq 1$, $\delta>0$, $\veps>0$.
 There exist a continuous function $f  \in\mathcal C_0(\TT)$ with $\| f \|_\infty\leq 1$, $\mathrm{Lip}_\xi( f )\leq 1$, an integer  $N\ge M$, and  a compact set $E\subset\TT$ such that
\begin{equation}\label{eq:holder1}
 \forall x\in E,\quad \exists m\in\{M,\dots,N\},\quad |S_{m,\alpha} f (x)|\geq A m ^\nu,
\end{equation}
\begin{equation}\label{eq:holder2}
\mathcal H^s_\delta(E^c)<{ \veps.}
\end{equation}
\end{lemma}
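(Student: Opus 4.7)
Set $t=\xi/(1-\nu)\in(0,1)$; the hypothesis rewrites as $s^2>t$. The plan is to follow the construction of Lemma~\ref{lem:continuous}, but to replace the sharp $\pm\veps$-bumps by trapezoidal bumps whose height and transition width are calibrated to the H\"older exponent. Fix a large integer $n$ (to be chosen at the end) and a transition width $w>0$. On each cell $\Delta_j^{(n)}$ ($0\le j<q_{n+1}$) and each cell $\Delta_j^{(n+1)}$ ($0\le j<q_n$) of the partition, place a bump that vanishes at the two endpoints of the cell, rises as $\tau\mapsto 2^{\xi-1}\tau^\xi$ on the two transition intervals of length $w$ near the endpoints, and is constant equal to $h:=2^{\xi-1}w^\xi$ on the central plateau; assign sign $+$ to the bump when $0\le j<q_{n+1}/2$ (resp.\ $0\le j<q_n/2$) and sign $-$ otherwise, ensuring $\int_{\TT}f\,d\lambda=0$. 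Using the subadditivity $a^\xi+b^\xi\le 2^{1-\xi}(a+b)^\xi$ together with the vanishing of $f$ at every cell boundary, one verifies that $\|f\|_\infty=h\le 1$ and $\mathrm{Lip}_\xi(f)\le 1$ hold uniformly, regardless of how the cells of the partition are spatially arranged in $\TT$.

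Choose $w=c_\veps q_{n+1}^{-1/s}$ with a small constant $c_\veps>0$ so that the transition region---a union of at most $4q_{n+1}$ intervals of length $w$---satisfies $\mathcal H^s_\delta<\veps/2$. Then $h\asymp q_{n+1}^{-\xi/s}$, and we take $m=\lceil (2A/h)^{1/(1-\nu)}\rceil\asymp q_{n+1}^{\xi/(s(1-\nu))}=q_{n+1}^{t/s}$, and set $N=m$. Call $x$ \emph{good} if it lies in the plateau of some $\Delta_j^{(n)}$ or $\Delta_j^{(n+1)}$ whose index $j$ sits at distance at least $m$ from the sign-flip index ($q_{n+1}/2$ or $q_n/2$) and from the wrap-around index ($q_{n+1}$ or $q_n$), so that $j,j+1,\dots,j+m-1$ all belong to the same half. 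For every good $x$, each iterate $R_\alpha^ix$, $0\le i<m$, lies in a plateau of $f$ with a common sign, giving $|S_{m,\alpha}f(x)|=mh\ge Am^\nu$. Let $E$ be the closure of the set of good points.

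The complement $E^c$ decomposes into (i) the transition regions, already with $\mathcal H^s_\delta<\veps/2$; and (ii) the near-boundary-index cells, at most $\lesssim m$ cells of length $d_n\asymp 1/q_{n+1}$ together with $\lesssim m$ cells of length $d_{n+1}$. Part (ii) contributes $\mathcal H^s_\delta\lesssim m q_{n+1}^{-s}\asymp q_{n+1}^{t/s-s}$; because $s>\sqrt t$, the exponent $t/s-s$ is strictly negative, and choosing $n$ large enough drives this below $\veps/2$. The same choice of $n$ simultaneously guarantees $N=m\ge M$ and $w<\delta$, so that the covers produced above are admissible for $\mathcal H^s_\delta$.

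The main obstacle is the emergence of the sharp exponent $\sqrt t$. It comes from the simultaneous control of two $\mathcal H^s$-contributions: the transitions force $q_{n+1}w^s\lesssim 1$ while the near-boundary cells force $m q_{n+1}^{-s}\lesssim 1$. Substituting the H\"older link $h\asymp w^\xi$ and the Birkhoff requirement $m\asymp (A/h)^{1/(1-\nu)}\asymp w^{-\xi/(1-\nu)}$, then eliminating $w$, yields exactly $s^2\ge\xi/(1-\nu)=t$, explaining why the hypothesis $s>\sqrt t$ is both needed and sufficient. A secondary technical difficulty---uniform H\"older control of $f$ across cells that are adjacent in $\TT$ but may carry opposite-sign bumps---is dissolved by the same subadditivity inequality and the vanishing of $f$ at every cell boundary.
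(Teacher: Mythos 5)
Your single-scale construction (one transition width $w\asymp q_{n+1}^{-1/s}$, one height $h\asymp w^{\xi}$, one time $m\asymp q_{n+1}^{t/s}$ on \emph{all} cells) is close in spirit to the paper's first case, and your H\"older control via vanishing at cell boundaries and $a^\xi+b^\xi\le 2^{1-\xi}(a+b)^\xi$ is fine. But there is a genuine gap exactly where the paper is forced into its second case: you implicitly assume that every second-generation cell $\Delta_j^{(n+1)}$, of length $d_{n+1}\asymp 1/q_{n+2}$, can accommodate two ramps of width $w$ plus a plateau, and that only $O(m)$ of these cells land in $E^c$. Neither is true in general. If $\alpha$ has rapidly growing denominators (say $q_{n+2}\ge q_{n+1}^{2}$ for all $n$, while $1/s\le 2$), then $d_{n+1}<2w$ for \emph{every} $n$, so the trapezoid cannot be drawn on any $\Delta_j^{(n+1)}$ and you cannot escape by "choosing $n$ large"; all $q_n$ of these cells must then be discarded, and $q_n$ can be of order $q_{n+1}\gg m$ (take $a_{n+1}=1$), so your stated bound for part (ii), $\mathcal H^s_\delta\lesssim m\,q_{n+1}^{-s}$, does not cover them. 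Separately, you never check $m<q_n/2$, without which there are no "good" indices in the second generation at all.

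The gap is repairable, and the repair shows why your route still closes: on any $\Delta_j^{(n+1)}$ with $d_{n+1}<2w$ set $f\equiv 0$ and discard the whole generation; its $\mathcal H^s_\delta$-content is at most $q_n(2w)^s\le 2^s c_\veps^{s}$ (using $q_n\le q_{n+1}$ and $w=c_\veps q_{n+1}^{-1/s}$), which is absorbed by taking $c_\veps$ small, exactly like your transition term; and if $m\ge q_n/2$ then $q_n\le 2m$, so discarding all of them is still covered by the $O(m)\cdot d_{n+1}^s\le O(m)q_{n+1}^{-s}$ estimate. With these two cases added (plus trivial fixes: parity of $q_{n+1},q_n$ for the zero mean, and $c_\veps$ small enough that the ramps fit in the $\Delta^{(n)}$-cells when $s=1$), your argument works and is in fact a streamlined alternative to the paper's proof: the paper distinguishes $\tau\ge\sqrt{(1-\nu)/\xi}$ (second generation is $\mathcal H^s$-negligible and dropped) from $\tau<\sqrt{(1-\nu)/\xi}$ (second generation is decorated with a differently calibrated cusp profile, exponents $\delta_1,\gamma_1$ depending on $\tau_n$, and a \emph{second} time scale $m=\lfloor q_n^{\delta_1}\rfloor$ — this is why the lemma allows $m$ to vary in $\{M,\dots,N\}$), whereas tying the height to the fixed width $w$ rather than to the cell length lets you use a single $m$. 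As submitted, however, the missing case analysis is a real hole, not a cosmetic one.
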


\begin{proof}
 The construction of   $f$ will be more or less difficult depending on the arithmetical properties of $\alpha$. Let $(p_n/q_n)$ be the 
$n$th convergent of $\alpha$
 in its continued fraction expansion. For each $n\geq 0$, there exists $\tau_n\geq 1$ such that $q_{n+2}=q_n^{\tau_n}$. We define 
 $$\tau:=\liminf_n \tau_n\in [1,+\infty].$$
 We then fix $\nu'\in (0,1)$ such that $\nu'>\nu$, $\xi+\nu'<1$ and 
  \begin{equation}\label{*ksnue} 
 \sqrt{\frac{\xi}{1-\nu'}}<s.
 \end{equation}
 If moreover $\tau<\sqrt{\frac{1-\nu}\xi}$, we also require
 that $\tau<\sqrt{\frac{1-\nu'}\xi}$. 
 
 Let $n$ be a large integer and consider the partition of $\TT$ described in Section \ref{sec:partition}:
 $$\TT=\bigcup_{0\leq j<q_{n+1}}\Delta_j^{(n)}\cup \bigcup_{0\leq j<q_n}\Delta_j^{(n+1)}.$$
 Again for ease of notation we suppose that $q_{n}$ and $q_{n+1}$ are even;
 if not, a modification similar to the one used in the proof of Lemma \ref{lem:continuous}
 can be used.
 
 \noindent\textsc{First case: }$\tau\geq\sqrt{\frac{1-\nu}\xi}$. Then, for $n$ large enough, $\tau_n s>1+\eta$ for some fixed $\eta>0$. We fix such an $n$
 and we then define $f$ as follows:
 \begin{itemize}
  \item on $\Delta_j^{(n)}=(a_j,b_j)$, $0\leq j<\frac{q_{n+1}}2$, $f$ is equal to $(x-a_j)^\xi$ on $\left[a_j,\frac{a_j+b_j}2\right]$, equal to $(b_j-x)^\xi$
  on $\left[\frac{a_j+b_j}2,b_j\right]$.
  \item On $\Delta_j^{(n)}=(a_j,b_j)$, $ \frac{q_{n+1}}2
  \leq j<q_{n+1}$, $f$ is equal to $-(x-a_j)^\xi$ on $\left[a_j,\frac{a_j+b_j}2\right]$, equal to $-(b_j-x)^\xi$
  on $\left[\frac{a_j+b_j}2,b_j\right]$.
  \item $f$ is equal to $0$ otherwise.
 \end{itemize}
 It is then clear that $\|f\|_\infty\leq 1$, $\lip_\xi(f)\leq 1$ and $\int_\TT fd\lambda=0$.   Recalling that $d_{n}=b_{j}-a_{j}$ for $0\leq j <q_{n+1}$  we then set 
 $$\delta_0=\sqrt{\frac{\xi}{1-\nu'}}\in (0,1),$$
 $$\gamma_0=\frac1{\delta_0}=\sqrt{\frac{1-\nu'}{\xi}}>1,$$
 $$\Gamma_j=\left(a_j+d_n^{\gamma_0},b_j-d_n^{\gamma_0}\right), \ \  0\leq j <q_{n+1},$$
 $$E_0=\bigcup_{j=0}^{\frac{q_{n+1}}2-1- \lf q_{n+1}^{\delta_0}\rf }\Gamma_j\cup\bigcup_{j=\frac{q_{n+1}}2}^{q_{n+1}-1- \lf q_{n+1}^{\delta_0}\rf }\Gamma_j.$$
Observe that if $y\in\Gamma_j$, then $|f(y)|\geq d_n^{\gamma_0\xi}$
 and that $R_\alpha(\Gamma_j)\subset\Gamma_{j+1}$, $0\leq j<q_{n+1} -1$.
It follows that, for $x\in E_0$ 
with constants  $C$ which do not depend on $n$ and 
may change from line to line
\begin{eqnarray}
\label{*eoest}
 \left|S_{ \lf q_{n+1}^{\delta_0}\rf  ,\alpha}f(x)\right|&\geq&C  \lf q_{n+1}^{\delta_0}\rf  d_n^{\gamma_0\xi}  \\
 \nonumber
 &\geq&C q_{n+1}^{\delta_0}q_{n+1}^{-\gamma_0\xi}\\
 \nonumber
 &\geq&C q_{n+1}^{\delta_0\left(1-\frac{\gamma_0}{\delta_0}\xi\right)}\\
 \nonumber
&\geq &C q_{n+1}^{\delta_0 \nu'}\\
 \nonumber
&\geq &A\lf q_{n+1}^{\delta_0}\rf^{\nu}
\end{eqnarray}
{ provided} $n$ is large enough. Thus \eqref{eq:holder1} is satisfied with $m=\lf q_{n+1}^{\delta_0}\rf $ and $E=E_0$ for large values of  $n$. Moreover, $E_0^c$ is contained in the union of 
\begin{itemize}
 \item $2 \lf  q_{n+1}^{\delta_0}\rf +2$ intervals of size $d_n$ (the intervals $\Delta^{(n)}_j$ which are not considered);
 \item $2q_{n+1}$ intervals of size $d_n^{\gamma_0}$ (the extreme parts of the intervals $\Delta_j^{(n)}$);
 \item $q_n$ intervals of size $d_{n+1}$ (the intervals of the following generation $\Delta^{(n+1)}_j$).
\end{itemize}
Hence, for $n$ large enough,
$$\mathcal H_\delta^s(E^c)\leq C\left(q_{n+1}^{\delta_0}q_{n+1}^{-s}+q_{n+1}q_{n+1}^{-\gamma_0 s}+q_n q_n^{-\tau_n s}\right).$$
Since $\delta_0-s<0$, $1-\gamma_0 s<0$ and $1-\tau_n s<-\eta$, \eqref{eq:holder2} is also satisfied provided $n$ is large enough.

 \noindent\textsc{Second case: }$\tau<\sqrt{\frac{1-\nu}\xi}$. This time, the intervals coming from $\bigcup_j \Delta_j^{(n+1)}$ are too  long 
 to be neglected with respect to the $\mathcal H^s$-measure.   By  
the choice of $\nu'$, we know that there  exist  integers $n$ as large as we want
 such that
 \begin{equation}\label{*tne}
1\leq \sqrt{\tau_n}\leq \tau_n<\sqrt{\frac{1-\nu'}\xi};
 \end{equation}

 we will fix such an $n$ later. 
 We keep the same values for $\delta_0$, $\gamma_0$, $\Gamma_j$ and $E_0$ and the same definition for $f$ on $\bigcup_{0\leq j<q_{n+1}}\Delta_j^{(n)}$
  as in the first case. 
On the other hand, we
 define $f$ on $\Delta_j^{(n+1)}=( u_j ,v_j  )$ by imposing 
 $f(x)=(x- u_j )^\xi$ on $\left[ u_j ,\frac{ u_j + v_j  }2\right]$, $f(x)=( v_j  -x)^\xi$
  on $\left[\frac{ u_j + v_j  }2, v_j  \right]$ if  $0\leq j<q_n/2$ and  $f(x)=-(x- u_j)^\xi$ on $\left[ u_j ,\frac{ u_j + v_j  }2\right]$, $f(x)=-( v_j -x)^\xi$
  on $\left[\frac{ u_j + v_j }2, v_j  \right]$ if  $q_n/2\leq j<q_n$. We then set
 $$ \ddd_{1,n} = \delta_1=\sqrt{\tau_n}\sqrt{\frac{\xi}{1-\nu'}}\in (0,1) ,  $$
 $$ \ggg_{1,n} = \gamma_1=\frac1{\delta_1}=\frac1{\sqrt{\tau_n}}\sqrt{\frac{1-\nu'}{\xi}}>1 ,  $$
 $$\Theta_j=\left( u_j +d_{n+1}^{\gamma_1}, v_j -d_{n+1}^{\gamma_1}\right) ,  $$
 $$E_1=\bigcup_{j=0}^{\frac{q_n}2-1- \lf q_n^{\delta_1}\rf }\Theta_j\cup\bigcup_{j=\frac{q_n}2}^{q_n-1- \lf q_n^{\delta_1}\rf}\Theta_j ,  $$
 and $E=E_0\cup E_1$.  Remember that $d_{n+1}\approx q_n^{-\tau_n}$. We can still use \eqref{*eoest} and can deduce analogously for any $x\in E_1$,
 \begin{eqnarray}
  \label{*CA*} 
\left|S_{ \lf q_n^{\delta_1}\rf ,\alpha}f(x)\right|&\geq&C q_n^{\delta_1}d_{n+1}^{\gamma_1\xi}\\ 
\nonumber
  &\geq&C q_n^{\delta_1-\tau_n\gamma_1\xi}\\
  \nonumber
&\geq&C\lf q_n^{\delta_1}\rf^{\nu'}\\
 \nonumber
&\geq&A\lf q_n^{\delta_1}\rf^{\nu}
 \end{eqnarray}
provided $n$ is large enough.  From now on we can fix a sufficiently large $n$. The set $E^c$ consists  of   at most
 \begin{itemize}
 \item $2 \lf q_{n+1}^{\delta_0}\rf +2$ intervals of size $d_n$;
 \item $2q_{n+1}$ intervals of size $d_n^{\gamma_0}$;
 \item $2 \lf q_n^{\delta_1}\rf +2$ intervals of size $d_{n+1}$;
 \item $2q_n$ intervals of size $d_{n+1}^{\gamma_1}$.
\end{itemize}
Thus,
$$\mathcal H^s_\delta(E^c)\leq C\left(q_{n+1}^{\delta_0-s}+q_{n+1}^{1-\gamma_0 s}+q_n^{\delta_1-\tau_n s}+q_n^{1-\tau_n \gamma_1 s}\right).$$
By using \eqref{*ksnue} and \eqref{*tne} we conclude exactly as before  since 
$$\delta_1-\tau_n s\leq \tau_n \left(\sqrt{\frac{\xi}{1-\nu'}}-s\right)\leq\sqrt{\frac{1-\nu'}{\xi}}\left(\sqrt{\frac{\xi}{1-\nu'}}-s\right)<0$$
and
$$1-\tau_n \gamma_1 s\leq 1-\sqrt{\frac{1-\nu'}\xi}s<0.$$
\end{proof}

\begin{proof}[Proof of Theorem \ref{thm:holder}]
We will prove slightly more than announced. Let $\mathcal E^\xi$ be the closed subspace of $\mathcal C_0^\xi(\TT)$ defined by
$$\mathcal E^\xi=\left\{f\in\mathcal C_0(\TT);\ \forall x,y\in\TT,\ |f(x)-f(y)|\leq |x-y|^\xi\right\}=\left\{f\in\mathcal C_0(\TT);\ \textrm{Lip}_\xi(f)\leq 1\right\}.$$
The space $\mathcal E^\xi$, equipped with the norm of the uniform convergence is now again a separable complete metric space. We will prove that, for all functions $f$ in a residual subset of $\mathcal E^\xi$, for all $\nu\in (0,1-\xi)$, $\dimh\left(\mathcal F_\nu(f)\right)\leq \sqrt{\frac\xi{1-\nu}}$. 
Since $\mathcal F_\nu(f)\subset\mathcal F_{\widetilde{\nu}}(f)$ provided $\nu\leq\widetilde{\nu}$, it is sufficient to prove this inequality for $\nu$ belonging to a sequence $(\nu_k)$ which is dense
in $(0,1-\xi)$. Now, the countable intersection of residual sets remaining residual, we  just have to prove that, for a fixed $\nu\in(0,1-\xi)$, all functions $f$
in a residual subset of $\mathcal E^\xi$ satisfy $\dimh(\mathcal F_\nu)\leq\sqrt{\frac{\xi}{1-\nu}}$.

Let $(h_l)$ be a sequence of $\mathcal C_0$-coboundaries which is dense in $\mathcal E^\xi$ and with $\textrm{Lip}_\xi(h_l)\leq 1-\frac 1l$. For any $l\geq 1$,
there exists $C_l>0$ such that $\sup_n \left\|S_{n,\alpha}h_l\right\|_\infty\leq C_l$. Let $f_l$, $N_l$ and $E_l$ be given by Lemma \ref{lem:holder}
with $s=\sqrt{\frac{\xi}{1-\nu}}+\frac 1l$, $\delta=\veps=\frac 1l$, $M=l$ and $A=l(C_l+l+1)$. We set $g_l=h_l+\frac 1l f_l$ so that $g_l\in\mathcal E^\xi$, $(g_l)$
is dense in $\mathcal E^\xi$ and, for any $x$ in the compact set $E_l$, there exists $m\in \{l,\dots,N_l\}$ with
$$|S_{ m,\alpha}g_l(x)|\geq (l+C_l+1) m^\nu-C_l\geq (l+1) m^\nu.$$
 We can then find  $\delta_l>0$ such that, for all $f\in B(g_l,\delta_l)$ and all $x\in E_l$,  there exists $m\in \{l,\dots,N_l\}$ with
\[
|S_{ m,\alpha}f(x)|\geq l m^\nu  \text{ and } \mathcal H^s_{1/l}(E_l^{c})<\frac{1}{l}.
\]
We set $\mathcal R=\bigcap_{L\geq 1}\bigcup_{l\geq L}B(g_l,\delta_l)\cap\mathcal E^\xi$ which is a residual subset of $\mathcal E^\xi$. 
Pick $f\in\mathcal R$. There exists an increasing sequence $(l_k)$ such that $f\in B(g_{l_k},\delta_{l_k})$. We set $E=\limsup_{ k} E_{l_k}$ and observe that, 
for any $x\in E$, 
$$\limsup_n \frac{|S_{n,\alpha}f(x)|}{n^\nu}=+\infty$$
so that $\mathcal F_\nu\subset E^c$. Now the construction of the sets $E_l$ ensures that
$$\dimh(E^c)\leq\sqrt{\frac{\xi}{1-\nu}}.$$
\end{proof}

\begin{question}
Is the value $\sqrt{\frac{1-\xi}{\nu}}$ optimal? In particular, it does not depend on the type of $\alpha$, which may look surprizing.
\end{question}

\section{Miscellaneous remarks}
\subsection{Open questions}
Our study  suggests  further questions. The first one is related to Corollary \ref{cor:individualresults}.
\begin{question}
Does there exist $\nu\in [1/2,1]$ such that
\begin{enumerate}
\item for all $\gamma>\nu$, for all $f\in\mathcal C_0(G)$, 
$$\mu\left(\left\{u\in G;\ \limsup_n \frac{S_{n,u}f(0)}{n^\gamma}\geq 1\right\}\right)=0;$$
\item for all $\gamma<\nu$, there exists a residual subset $\mathcal R$ of $\mathcal C_0(G)$ such that, for all $f\in\mathcal R$, 
$$\mu\left(\left\{u\in G;\ \limsup_n \frac{S_{n,u}f(0)}{n^\gamma}=+\infty\right\}\right)=1?$$
\end{enumerate}
\end{question}

It can be shown that $\nu=1/2$ works for (ii). Indeed, Lemma \ref{lem:key} and Fubini's theorem imply that, for all $\veps\in(0,1)$, all $C>0$ and all $M\in\NN$, there exist $x\in G$,
 $f\in \mathcal C_0(G)$, $N>M$ and $E\subset G$ with $\|f\|_\infty<\veps$, $\mu(E)>1-\veps$ 
 and $\sup_{M\leq n\leq N}\frac{S_{n,u}f(x)}{n^{1/2}}\geq C$ for $u\in E$. Translating $f$ if necessary, we may assume that $x=0$. We then conclude exactly as in the proof of Theorem \ref{thm:measuretwovariables}.

Second, Theorem \ref{thm:continuous} improves Theorem \ref{thm:individualresults} for rotations of the circle by replacing nowhere dense sets with the more precise notion of 
sets with zero Hausdorff dimension.
There are also enhancements of meager sets, for instance $\sigma$-porous sets (see \cite{Za87})

\begin{question}
 Does there exist a residual subset $\mathcal R$ of $\mathcal C_0(\TT)$ such that, for any $f\in\mathcal R$, $\mathfrak E_\ppp(f)$ is $\sigma$-porous?
\end{question}

In the spirit of Theorem \ref{thm:measuretwovariables}, the next step would be to perform a multifractal analysis of the exceptional sets. Precisely, let $f\in\mathcal C_0(\TT)$
and $\nu\in(1/2,1)$. Let us set
$$\mathcal E^-(\nu,f)=\left\{ (\alpha,x)\in\TT^2;\ \limsup_n \frac{\log|S_{n,\alpha}f(x)|}{\log n}\geq\nu\right\}.$$
These sets have Lebesgue measure zero.
\begin{question}
Can we majorize the Hausdorff dimension of $\mathcal E^-(\nu,f)$?
\end{question}

We could also replace everywhere the $\limsup$ by $\liminf$. 
\begin{question}
Let $\psi:\NN\to\NN$ with $\psi(n)=o(n)$. Does there exist $f\in\mathcal C_0(\Omega)$ such that $\{x\in\Omega;\ \liminf_n |S_{n,T}f(x)|/|\psi(n)|=+\oo\}$ is residual? has full measure? 
\end{question}

\subsection{Other sums}
The study of $S_{n,\alpha}f(x)$ is a particular case of the series $\sum_{n\geq 1}a_n f(x+n\alpha)$.
In the particular case $a_n=1/n$ this series, also called the one-sided ergodic Hilbert transform, was
thoroughly investigated in \cite{FanSchme17}.

In \cite{FanSchme17}, the authors show that for any non-polynomial function $f\in\mathcal C^2_0(\TT)$ with values in $\mathbb R$, there exists a residual set
$\mathcal R_f$ of irrational numbers depending on $f$ such that, for every $\alpha\in\mathcal R_f$, 
$$\limsup_N \sum_{n=1}^N \frac{f(x+n\alpha)}{n}=+\infty$$
for almost every $x\in\TT$ and they ask if this holds for every $x\in\TT$ (they show that this is the case if $\hat f(n)=0$ when $n\leq 0$). We provide a counterexample.
\begin{example}
 Let $a\in(0,1)$ and $f\in\mathcal C^2_0(\mathbb T)$ be defined by its Fourier coefficients $\hat f(0)=0$, $\hat f(n)=ia^n$ for $n>0$, $\hat f(n)=-i a^{-n}$ for $n<0$. A small computation shows that
 $$f(x)=\frac{iae^{2\pi i x}}{1-ae^{2\pi ix}}-\frac{iae^{-2\pi ix}}{1-ae^{-2\pi i x}}=\frac{-2a\sin(2\pi x)}{1-2a\cos(2\pi x)+a^2}.$$
 We shall prove that the one-sided ergodic Hilbert transform of $f$ is bounded at $x=0$. Indeed, setting
 $$G_N(t)=\sum_{n=1}^N \frac{e^{2\pi i nt}}n,$$
 it is easy to show that
 \begin{eqnarray*}
  \sum_{n= 1}^N \frac{f(n\alpha)}n&=&\sum_{k>0}ia^k G_N(k\alpha)-\sum_{k>0}
   i  a^k G_N(-k\alpha)\\
  &=&i\sum_{k>0}a^k \big(G_N(k\alpha)-\overline{G_N(k\alpha)}\big)\\
  &=&  -  2\sum_{k>0}a^k \Im m \big(G_N(k\alpha)\big).
 \end{eqnarray*}
 Now, it is well-known that the imaginary part of $G_N(t)$, namely $\sum_{n=1}^N \frac{\sin(2\pi nt)}{n}$ is uniformly bounded in $N$ and $t$
 (see e.g. \cite[p.4]{KAHANEABS}).
\end{example}
\begin{question}
Can we investigate, in the spirit of this paper and of \cite{FanSchme17}, the case $a_n=n^{-a}$, with $0<a<1$?
\end{question}

\subsection{Coboundaries in $\mathcal C^\xi_0(\TT)$}
The natural norm in $\mathcal C^\xi_0(\TT)$ is given by 
\begin{equation}\label{*cksi}
\Vert f\Vert_\xi=\sup_{x\in\TT}{|f(x)|} +\sup_{\substack{x,y\in\TT\\ x\not=y}}
\frac{|f(x)-f(y)|}{|x-y|^{  \xi  }}.
\end{equation}

One may wonder whether, in Theorem \ref{thm:holder}, we have residuality in $(\mathcal C^\xi_0(\TT),\Vert\ \Vert_\xi)$ instead of in $(\mathcal E^\xi,\Vert\ \Vert_\infty)$. A natural way to do that would be to prove
that the coboundaries are dense in $\mathcal C^\xi_0(\TT)$.
This is not the case, which shows again that
$\mathcal C_0^\xi(\TT)$ is a weird space. 

In $\mathcal C_0^\xi(\TT)$ we denote the ball of radius $r$ centered at $f\in \mathcal C_0^\xi(\TT)$
by $B_0^\xi(f,r)$, that is $g\in B_0^\xi(f,r)$ if and only if $\Vert g-f\Vert_\xi<r$.
We shall prove the following precise statement.

\begin{theorem}\label{*THCB}
For any $\aaa\in\RR\sm \QQ$ for any $\xi\in (0,1)$
there exists $f\in \mathcal C_0^\xi(\TT)$  such that for any $g\in B_0^\xi(f,0.1)$
the function $g$ is not a 
$\mathcal C_{0}$ (and hence not a $\mathcal C_0^\xi$)-coboundary, that is
there is no $u\in \mathcal C_0(\TT)$  such that $g=u\circ R_{\aaa}-u$.
Hence $\mathcal C_{0}$-coboundaries are not dense in $\mathcal C_0^\xi(\TT)$.
\end{theorem}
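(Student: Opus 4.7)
The plan is to exhibit $f$ as an explicit lacunary cosine series and to rule out coboundary behaviour in the entire ball $B_0^\xi(f,0.1)$ by examining a single Fourier coefficient. The guiding idea is that a $\mathcal C_0$-coboundary $g=u\circ R_\alpha-u$ satisfies the Fourier identity $\hat g(k)=(e^{2\pi ik\alpha}-1)\hat u(k)$, hence $|\hat g(k)|\leq 2\pi\|u\|_\infty\|k\alpha\|$ where $\|\theta\|$ is the distance from $\theta$ to $\ZZ$; at $k=q_n$ (the denominator of a convergent of $\alpha$), this forces the decay $|\hat g(q_n)|\leq 2\pi\|u\|_\infty/q_{n+1}$. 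On the other hand, for $h\in\mathcal C_0^\xi(\TT)$ the substitution $x\mapsto x+1/(2k)$ in $\hat h(k)=\int h(x)e^{-2\pi ikx}\,dx$ yields the standard bound
\[
|\hat h(k)|\leq \frac{\textrm{Lip}_\xi(h)}{2^{1+\xi}|k|^\xi}\leq \frac{\|h\|_\xi}{2|k|^\xi}.
\]
Since $q_{n+1}/q_n^\xi\geq q_n^{1-\xi}\to+\infty$, these two rates are incompatible as soon as $|\hat f(q_n)|$ itself sits at the Hölder threshold $q_n^{-\xi}$.

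I would therefore define
\[
f(x)=\sum_{n\geq 1}q_n^{-\xi}\cos(2\pi q_n x),
\]
so that $\hat f(q_n)=\tfrac12 q_n^{-\xi}$ for every $n$. The recursion $q_{n+1}\geq q_n+q_{n-1}$ forces Fibonacci-type growth (in particular $q_{n+2}\geq 2q_n$), so $(q_n)$ is lacunary; a direct block estimate of $|f(x)-f(y)|$, splitting the series at the index $n^*$ for which $q_{n^*}|x-y|\approx 1$ and bounding the two geometric tails separately, then shows $f\in\mathcal C_0^\xi(\TT)$, while the zero-mean condition is automatic from the cosines.

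Finally, for $g=f+h$ with $\|h\|_\xi<0.1$, the Hölder estimate gives $|\hat h(q_n)|\leq 0.05/q_n^\xi$, hence
\[
|\hat g(q_n)|\geq \frac{1}{2q_n^\xi}-\frac{0.05}{q_n^\xi}=\frac{0.45}{q_n^\xi}.
\]
Were $g=u\circ R_\alpha-u$ for some $u\in\mathcal C_0(\TT)$, combining this with $\|q_n\alpha\|\leq 1/q_{n+1}$ would force
\[
\|u\|_\infty\geq\frac{0.45\,q_{n+1}}{2\pi\,q_n^\xi}\geq\frac{0.45}{2\pi}\,q_n^{1-\xi}\xrightarrow[n\to\infty]{}+\infty,
\]
a contradiction. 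Hence no $g\in B_0^\xi(f,0.1)$ is a $\mathcal C_0$-coboundary. The main obstacle is the Hölder estimate on the lacunary series $f$, which amounts to a classical Paley--Zygmund-type computation requiring care with the constants; the numerical factors in the two Fourier inequalities are already calibrated so that the radius $0.1$ works directly, with room to spare (any constant $c<1$ could equally be used after rescaling $f$).
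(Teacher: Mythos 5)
Your proof is correct, but it follows a genuinely different route from the paper. The paper builds $f$ by hand as a union of small tent-shaped bumps of height $h_k^\xi$ placed on disjoint intervals around orbit points $j\aaa$, $j\in J_k\subset[n_k,n_{k+1})$, and then shows that for every $g$ in the ball the Birkhoff sums $S_{n,\aaa}g(0)$ along the single orbit of $0$ are unbounded, contradicting the bound $|S_{n,\aaa}g(x)|\le 2\|u\|_\infty$ valid for any coboundary with continuous transfer function; this requires a delicate combinatorial selection of the sets $J_k$ (uniform distribution of $(j\aaa)$, maximality, the smallness conditions \eqref{*CB1a}--\eqref{*CB1e}). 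You instead argue on the Fourier side: a coboundary $g=u\circ R_\aaa-u$ satisfies $\hat g(k)=(e^{2\pi ik\aaa}-1)\hat u(k)$, hence $|\hat g(q_n)|\le 2\pi\|u\|_\infty/q_{n+1}$, while for the lacunary Weierstrass-type function $f(x)=\sum_n q_n^{-\xi}\cos(2\pi q_nx)$ every perturbation $h$ with $\|h\|_\xi<0.1$ changes $\hat f(q_n)=\tfrac12 q_n^{-\xi}$ by at most $\mathrm{Lip}_\xi(h)2^{-1-\xi}q_n^{-\xi}\le 0.05\,q_n^{-\xi}$, so $|\hat g(q_n)|\ge 0.45\,q_n^{-\xi}$ and $\|u\|_\infty\gtrsim q_n^{1-\xi}\to\infty$. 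All the ingredients you use are standard and the constants do work out; the only point needing care is the H\"older regularity of $f$, where the lacunarity is only ``over two steps'' ($q_{n+2}\ge q_{n+1}+q_n\ge 2q_n$), so in the block estimate you should sum the even- and odd-indexed frequencies separately — this still gives $\mathrm{Lip}_\xi(f)\le C(\xi)<\infty$, which is all the theorem requires since no bound $\mathrm{Lip}_\xi(f)\le 1$ is claimed. What each approach buys: your Fourier argument is shorter and, since it only uses $|\hat u(q_n)|\le\|u\|_2$, it in fact rules out even measurable $L^2$ (a fortiori bounded or continuous) transfer functions, a slightly stronger conclusion; the paper's construction is Fourier-free and real-variable, fitting the toolkit (Ostrowski-type partitions, orbit estimates) used elsewhere in the paper and making explicit the mechanism of unbounded Birkhoff sums at a specific point, which is closer in spirit to the dynamical statements of the rest of the article.
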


\begin{proof}
By induction we select $n_{1}=1$, $n_{k}\in \NN$, $J_{k}\sse [n_{k},n_{k+1})\cap \ZZ$
with the following properties.
If we let $\ds h_{k}=\Big (\frac{k}{n_{k+1}}\Big )^{1/\xi}$ then the intervals
\begin{equation}\label{*CB1a}
\{ [j\aaa-h_{k},j\aaa+3h_{k}] ; j\in J_{k},\ k\in \NN \} \text{ are pairwise disjoint}
\end{equation}
(all these intervals are considered $\mod 1$ on $\TT$),
\begin{equation}\label{*CB1c}
\lll\left(\bigcup_{j\in J_{k}}[j\aaa-h_{k},j\aaa+3h_{k}]\right)<\frac{1}{100^{k+2}},
\end{equation}
\begin{equation}\label{*CB1d}
m_{k}\defeq \# J_{k}> 0.99\cdot n_{k+1}.
\end{equation}
For this property we can use that $\bigcup_{k'<k}\bigcup _{j\in J_{k'}}[j\aaa-h_{k'},j\aaa+3h_{k'}]$ is a union 
 of intervals, which by \eqref{*CB1c}  are of total measure less than $1/200$ and the sequence
$(j\aaa)$ is uniformly distributed on $\TT$, especially if we suppose that the
$n_{k}$s are denominators of suitable convergents of $\aaa$
and recall Subsection \ref{sec:partition}.
We also suppose that $J_{k}$ is maximal possible, by this we mean that
if $j\in [n_{k},n_{k+1})\cap \ZZ$ and $j\not \in J_{k}$ then 
\begin{equation}\label{*CB1e}
[j\aaa-h_{k},j\aaa+3h_{k}]\cap \bigcup_{k'<k}\bigcup _{j'\in J_{k'}}[j'\aaa-h_{k'},j'\aaa+3h_{k'}]\not=\ess.
\end{equation}

By the definition of $h_{k}$ and \eqref{*CB1d} we have 
\begin{equation}\label{*CB1b}
m_{k}\cdot h_{k}^{\xi}>0.99 \cdot k.
\end{equation}

Next we define $f$. On an interval $[j\aaa-h_{k},j\aaa+3h_{k}]$, $j\in J_{k}$, $k\in\NN$ we define
$f$ in the following way:
$f(j\aaa-h_{k})=f(j\aaa+h_{k})=f(j\aaa+3h_{k})=0$ and
\begin{equation}\label{*CB2b}
f(j\aaa)=h_k^\xi,\  f(j\aaa+2h_{k})=-h_k^\xi,
\end{equation}
otherwise $f$ is linear on each $[j\aaa +nh_{k},j\aaa+(n+1)h_{k}]$ with $n\in [-1,0,1,2]$.
If $x\not \in \cup_{k\in \NN}\cup _{j\in J_{k}}[j\aaa-h_{k},j\aaa+3h_{k}]$ then we set $f(x)=0$.

It is obvious that $f \in \mathcal C_0^\xi(\TT)$ with $\textrm{Lip}_\xi(f)\leq 1$. 

Suppose that $g\in B_0^\xi(f,0.1)$ and proceeding towards a contradiction suppose that
$g=u\circ R_{\aaa}-u$ with a $u\in \mathcal C_0(\TT).$
Then there exists $K_{u}$  such that $|u|\leq K_{u}$.

Clearly, for any $x\in\TT$ and any $n\in\mathbb N$, we have
\begin{equation}\label{*CB2a}
|S_{n,\aaa}g(x)|=\left|\sum_{j=0}^{n-1}g(x+j\aaa)\right|=|u(x+(n+1)\aaa)-u(x)|\leq 2 K_{u}.
\end{equation}

We will prove in \eqref{*CB3c} and \eqref{*CB3d} that for any function $g\in B_0^\xi(f,0.1)$, its Birkhoff sums are not bounded and this will provide a contradiction.

Suppose $k$ is fixed. Since $g\in B_0^\xi(f,0.1)$ we have for any $j\in J_{k}$
\begin{equation}\label{*CB3a}
\frac{|f(j\aaa)-g(j\aaa)-(f(j\aaa+2h_{k})-g(j\aaa+2h_{k}))|}{|2h_{k}|^{\xi}}<0.1.
\end{equation}

This and \eqref{*CB2b} imply that for $j\in J_{k}$
\begin{equation}\label{*CB3b}
g(j\aaa)-g(j\aaa + 2h_k )\geq 0.9\cdot 2^{\xi} h_{k}^{\xi}>0.9 h_{k}^{\xi}=0.9\frac{k}{n_{k+1}}.
\end{equation}

Next we consider the cases when $j\not\in J_{k}$, $j\in [n_{k},n_{k+1})$. Then 
\eqref{*CB1e} applies.
Suppose first that there exists $k'<k$, $j'\in J_{k'}$,  such that 
$j\aaa,j\aaa + 2h_k\in [j'\aaa-h_{k'},j'\aaa+3h_{k'}]$.
The construction of $f$ on $[j'\alpha-h_{k'},j'\alpha+3h_{k'}]$ ensures that
\begin{equation}\label{*CB4**a}
|f(j\aaa)-f(j\aaa + 2h_k)|\leq 2h_k (h_{k'})^{\xi-1}< 0.001\cdot h_{k}^{\xi}
\end{equation}
provided $n_{k+1}$ was choosen sufficiently large.

If $j\aaa\not\in \bigcup_{k'<k}\bigcup_{j'\in J_{k'}}[j'\aaa-h_{k'},j'\aaa+3h_{k'}]$ then either
$f(j\aaa)=0$, or $j\aaa\in \bigcup_{k'>k}\bigcup_{j'\in J_{k'}}[j'\aaa-h_{k'},j'\aaa+3h_{k'}]$.
In this latter case $|f(j\aaa)|\leq h_{k'}^{\xi}$ with $ k'> k$  and we can suppose by the inductive definition of the
$h_{k'}$ that $h_{k'}<0.0005^{1/\xi}\cdot h_{k} $.
Thus 
\begin{equation}\label{*CB5*a}
|f(j\aaa)|\leq 0.0005  \cdot h_k^\xi.
\end{equation}
Similarly if $j\aaa + 2h_k\not\in \bigcup_{k'<k}\bigcup_{j'\in J_{k'}}[j'\aaa-h_{k'},j'\aaa+3h_{k'}]$ we can suppose that
\begin{equation}\label{*CB5*b}
|f(j\aaa + 2h_k)|\leq 0.0005  \cdot h_k^\xi.
\end{equation}

In case one of $j\aaa,j\aaa + 2h_k$ belongs to 
a $[j'\aaa-h_{k'},j'\aaa+3h_{k'}]$, $k'<k$, $j'\in J_{k'}$ and the other is not an element of any such interval 
then $f(x)=0$ at some $x$ in $[j\aaa,j\aaa + 2h_k]$ and a combination
of \eqref{*CB4**a} and \eqref{*CB5*a}, or \eqref{*CB5*b}
is applicable.

Summarizing, we have finally shown that for all $j\in [n_k,n_{k+1})\backslash J_k$, 
\begin{equation}\label{*CB6*a}
|f(j\aaa)-f(j\aaa + 2h_k)|< 0.002  \cdot  h_k^\xi.
\end{equation}

Since $g\in B_0^\xi(f,0.1)$ by \eqref{*CB3a} and \eqref{*CB6*a}  we obtain  
\begin{equation}\label{*CB6*b}
|g(j\aaa)-g(j\aaa + 2h_k)|< 0.102\cdot h_k^\xi \cdot 2^{\xi}.
\end{equation}

We claim that either
\begin{equation}\label{*CB3c}
\sum_{j=n_{k}}^{n_{k+1}-1}
g(j\aaa)\geq \frac{1}{4} n_{k+1}h_k^\xi > \frac{k}{4}
\end{equation}
(see \eqref{*CB1b} as well), or
\begin{equation}\label{*CB3d}
\sum_{j=n_{k}}^{n_{k+1}-1}
g(j\aaa + 2h_k)\leq -\frac{1}{4} n_{k+1}h_k^\xi <- \frac{k}{4}.
\end{equation}
It is clear that for large $k$ this will contradict \eqref{*CB2a}. 

Next suppose that the negation of \eqref{*CB3c} and the negation
of \eqref{*CB3d}
hold.

This implies 
\begin{equation}\label{*CB7*a}
\sum_{j=n_{k}}^{n_{k+1}-1}
\left(g(j\aaa)-g(j\aaa + 2h_k)\right) < 2\cdot\frac{k}4=\frac{1}{2} n_{k+1}h_k^\xi.
\end{equation}

By \eqref{*CB3b} and \eqref{*CB1d}
\begin{equation}\label{*CB7*b}
\sum_{j\in J_{k}}\left(g(j\aaa)-g(j\aaa + 2h_k)\right)\geq \# J_{k} \cdot 0.9
\frac{k}{n_{k+1}}> 0.99\cdot n_{k+1}\cdot 0.9 h_k^\xi.
\end{equation}
On the other hand, by \eqref{*CB1d} and \eqref{*CB6*b}
\begin{equation}\label{*CB7*c}
\sum_{\substack{j=n_{k}\\ j\not\in J_{k}}}^{n_{k+1}-1}
|g(j\aaa)-g(j\aaa + 2h_k)| < 0.01\cdot  n_{k+1} \cdot 0.102\cdot h_k^\xi  \cdot 2^{\xi}.
\end{equation}
Now \eqref{*CB7*b} and \eqref{*CB7*c} contradict \eqref{*CB7*a}.
\end{proof}

\providecommand{\bysame}{\leavevmode\hbox to3em{\hrulefill}\thinspace}
\providecommand{\MR}{\relax\ifhmode\unskip\space\fi MR }
\providecommand{\MRhref}[2]{%
  \href{http://www.ams.org/mathscinet-getitem?mr=#1}{#2}
}
\providecommand{\href}[2]{#2}


\bibliographystyle{amsplain} 
\bibliography{bby} 

\end{document}